\documentclass[12 pt]{article}
\usepackage[english]{babel}
\usepackage[utf8]{inputenc}
\usepackage{johd}
\usepackage{natbib}
\usepackage{multirow}

\usepackage{geometry}
 \usepackage[ruled,vlined]{algorithm2e}
 
\usepackage[shortlabels]{enumitem}
\usepackage{graphicx,  amssymb,amsmath,amsthm, amsfonts, tikz} 
\usepackage[font=small,labelfont=bf]{caption}
\usepackage{subcaption}
\everymath{\displaystyle}

\newtheorem{theorem}{Theorem}

\newtheorem{remark}{Remark}

\newcommand{\K}{\mathcal{K}}

\newcommand{\per}{\mathrm{per}}

\newcommand{\F}{\mathcal{F}}
\renewcommand{\S}{\mathcal{S}}
\newcommand{\I}{{\boldsymbol{I}}}

\renewcommand{\Im}{\mathrm{Im}\,}

\newcommand*{\N}{\ensuremath{\mathbb{N}}}
\newcommand*{\Z}{\ensuremath{\mathbb{Z}}}
\newcommand*{\R}{\ensuremath{\mathbb{R}}}

\newcommand{\x}{{\boldsymbol{x}}}

\newcommand{\J}{{\boldsymbol{J}}}
\newcommand{\txt}{\textstyle}

\title{Identifying defective units in infinite periodic arrays of point sources}

 \author{Dinh-Liem Nguyen\thanks{Department of Mathematics, Kansas State University, Manhattan, KS 66506 (\href{mailto:dlnguyen@ksu.edu}{dlnguyen@ksu.edu})} \and  Nhung H. Nguyen\thanks{Department of Mathematics, Kansas State University, Manhattan, KS 66506 (\href{mailto:nhungnh@ksu.edu}{nhungnh@ksu.edu})} \and  Thi-Phong Nguyen\thanks{Department of Mathematical Sciences, New Jersey Institute of Technology, Newark, NJ 07102 (\href{mailto:thiphong.nguyen@njit.edu}{thiphong.nguyen@njit.edu})}}

 \date{}

\begin{document}
\maketitle
\begin{abstract} 
\noindent This paper focuses on identifying defective units in unbounded periodic arrays of point sources using boundary data. The study is motivated by the noninvasive evaluation of large-scale periodic source systems.
Unlike classical inverse source problems in free space, the key challenge here lies in the disruption of periodicity caused by defective sources in the infinite array. To address this, we employ the Floquet–Bloch transform to reformulate the original inverse source problem as a quasi-periodic inverse source problem.
We first establish uniqueness theorems for both the original and the quasi-periodic formulations. Then, we develop a new numerical method for identifying defective sources. This method combines a sampling indicator function with an algebraic technique to determine not only the number of defective sources, but also their locations and intensities. Numerical experiments are presented to validate the effectiveness of the proposed method.

\end{abstract}

\noindent \keywords{inverse source problems, defective sources identification, unique determination,  periodic arrays, Floquet-Bloch transform, imaging function }\\

\section{Introduction}

We consider a one-dimensional array of point sources that is unbounded, periodic along the $x_1$-direction with period $L>0$, and bounded in the $x_2$-direction between $-h$ and $h$, for some $h>0$. Each unit cell of length 
$L$ contains one point source. We denote by 
\begin{equation}\notag
\Omega: = \R \times (-h,h), \quad \Omega_0: = \left(-\frac{L}{2}, \frac{L}{2}\right)\times (-h,h),
\end{equation} 
and let $\x_0$ be the location of the source in $\Omega_0$. The locations of all other sources in the array are then given by $\x_j :=\x_0+(jL,0)^{\top}$ for $j\in \Z$. These sources are represented by the delta distribution $\delta(x;\x_0+(jL,0)^{\top})$ for $x\in \Omega$ and have corresponding intensities $\gamma_j$.
\begin{center}
    \includegraphics[width=0.95\textwidth]{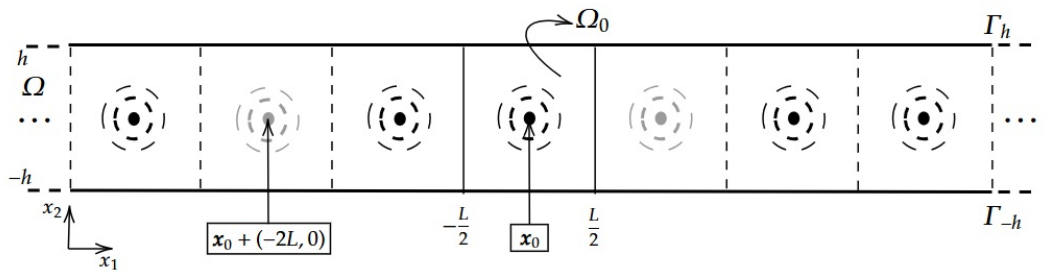}
\end{center}
In this work, we assume that some sources are not functioning properly, in the sense that their intensities differ from $1$. In particular, there are $N \in \N^{*}$ defective sources located at $\x_{m_\ell}$, each having intensity  $\gamma_{m_\ell}\neq 1$,  for $\ell = 1, \ldots, N$. We denote by $\I$ the set of indices of all defective sources, i.e., $\I:=\{m: \gamma_m\neq 1\}$ and assume that $\I$ is a finite set. 
\smallskip 

The wave generated by all point sources in a homogeneous background with a wave number $k > 0$ satisfies the Helmholtz equation
\begin{equation} \notag
\Delta u + k^2 u = - \left(\sum_{m\in \Z \setminus \I} \delta(\cdot, \x_0+(mL,0)^{\top}) + \sum_{m \in \I} \gamma_m \delta(\cdot, \x_0+(mL,0)^{\top})\right),
\end{equation} 
which is equivalent to
\begin{equation}
\label{eq:direct:1s0}
\Delta u + k^2 u = -\left(\delta_{\per}(\cdot, \x_0) +  \sum_{m \in \I} \sigma_m \delta(\cdot, \x_m)\right),
\end{equation}
where 
$$\sigma_m := \gamma_m -1, \quad \delta_{\per}(\cdot, \x_{0}): = \sum\limits_{m \in \Z} \delta(\cdot, \x_0+(mL,0)^{\top}),\qquad \text{in } \Omega.$$
We denote by $G_{\per}(\cdot, \x_0)$ the outgoing  periodic Green's function with period $L$ associated with the point source at $\x_0$; that is, $G_{\per}(\cdot, \x_0)$ admits the spectral representation 
\[
    G_{\per}(x,\x_0) =\sum_{n\in\Z}\frac{i}{2L\theta_{n}}e^{i\frac{2\pi n}{L}\left(x_1-\x_{0}^{(1)}\right)+i\theta_{n}\left|x_2- \x_{0}^{(2)}\right|},\quad x, \x_0 \in \Omega_0, \;  x\neq \x_0,
\] 
where $\theta_{n} = \sqrt{k^2 - \left(2\pi n/L\right)^2}, \, \theta_{n}\neq 0$, with $\Im(\theta_n) \geq 0$.
Then we impose the outgoing behavior of $u$ by requiring that $ u - G_{\per}(\cdot, \x_0)$ satisfies the Sommerfeld radiation condition.  Then the outgoing wave solution $v$ to~\eqref{eq:direct:1s0} is  given by 
\begin{equation}
\label{form:v}
    u(x) = G_{\per}(x, \x_0) + \sum_{m \in \I} \sigma_m\frac{i}{4} H^{(1)}_0(k|x - \x_m|),
\end{equation}where $H^{(1)}_0$ is a Hankel function of the first kind and zero order.

\smallskip

The objective of this work is to determine the set of indices of all defective sources $\I$ and the corresponding intensities $\gamma_m$ for all $m \in \I$ using  measurements of $v$ on  horizontal lines $\Gamma_{\pm t}: =\R\times \{\pm t\}$ for some $t > h$. 
We consider the following inverse problem below.

\smallskip

\noindent {\bf Inverse problem:} Determine the indices $m \in  \I$ corresponding to the defective sources located at $\x_m = \x_0 + (mL,0)^{\top}$  and their intensities $\gamma_m$ using measurements of the radiated waves $v$ on $\Gamma_{ t} \cup \Gamma_{-t}$ with $t > h$.

\smallskip

We note that the results in this paper can be directly extended to the 3D setting with two-dimensional periodic source arrays.
This problem arises when the source distribution deviates from its expected or designed configuration. Understanding and locating such defects is crucial for diagnosing system failures, improving performance, or reconstructing hidden inhomogeneities. Our goal is to develop a robust and computationally efficient method capable of detecting and localizing defective units in large scale periodic source systems using single frequency measurements. Unlike classical inverse source problems in free space or bounded domains, the key challenge here lies in the disruption of periodicity caused by defective sources in the infinite array. To address this, we employ the Floquet–Bloch transform to reformulate the inverse problem of interest as a quasi-periodic inverse source problem defined on a single unit cell, thereby avoiding the need to solve the inverse source problem on an unbounded domain~\cite{haddar2017volume, lechleiter2017convergent}.

Over the past two decades, inverse source problems in free space or bounded domains have been studied extensively from both theoretical and computational perspectives. Significant progress has been made in the analysis of uniqueness and stability properties, see, e.g., \cite{acosta2012multi, alves2009iterative, alzaalig2020fast, bao2020stability, bao2010multi,  cheng2016increasing, el2005inverse,el2013stability, el2000inverse} and the references therein. Meanwhile, various numerical algorithms have been proposed to reconstruct sources from boundary data. In particular, methods based on multi-frequency data have shown enhanced stability in recovering unknown sources, see \cite{bao2011numerical, bao2015recursive,  chung2009identification, el2011inverse,  eller2009acoustic, griesmaier2017factorization,ji2021reconstruction,  nguyen2019convergent,nguyen2022reconstructing,ren2019imaging, zhang2015fourier}. 

Inverse source problems with single-frequency data can face non-uniqueness of solution unless certain a priori assumptions about the source are imposed~\cite{kress2013reconstruction}. To address this challenge, several computational strategies have been developed. Algebraic reconstruction methods have been applied to identify point sources in elliptic equations such as the Poisson and Helmholtz equations \cite{acosta2012multi, li2021lipschitz, nara2008algebraic}, while Newton-type iterative algorithms have been used to recover both point and extended sources~\cite{ji2021reconstruction}. 
In recent years, sampling-based techniques have gained popularity due to their simplicity and efficiency.  Sampling-type methods have been studied in~\cite{harris2023reconstruction,harris2024direct, zhang2018locating} for identifying point sources in the Helmholtz equation and Maxwell's equations. The key advantages of such methods are their non-iterative nature, robustness, speed, and low computational cost.

Unlike the classical inverse source problems considered in the works mentioned above, the problem of identifying defective sources within an infinite periodic array appears, to the best of our knowledge, not to have been studied before. In this paper, we first establish uniqueness results for both the original inverse problem and its quasi-periodic counterpart. Motivated by the numerical method studied in~\cite{harris2023reconstruction} for the inverse point-source problem and the approach developed in~\cite{nguyen2023new} for imaging periodic scattering structures, we introduce a new numerical reconstruction scheme that combines a novel imaging function with an algebraic procedure for detecting defective units in unbounded periodic source arrays. The algebraic procedure employs linear algebra tools. The proposed method is stable with respect to noise, computationally efficient, and straightforward to implement.

{
It is worth emphasizing that the algebraic approach employed here identifies defective sources through the evaluation of matrix determinants and the solvability of a system of equations derived from their relationship with the imaging function. An alternative algebraic, matrix-based approach for recovering source information was investigated in \cite{el2013stability,el2000inverse} for inverse point source problems in bounded domains. In those works, the authors employed the reciprocity gap functional together with specially designed test functions to construct Hankel matrices and associated linear systems to identify unknown sources. Although our approach also relies on matrices and linear systems, the key ingredients are derived from a new imaging function inspired by recent work of the first author and collaborators in~\cite{harris2023reconstruction,harris2024direct}.
}

The structure of the paper is as follows. In Section \ref{ssection2}, we focus on the uniqueness of the solution and the reformulation of the problem through the Floquet-Bloch transform. Section \ref{section3} is dedicated to the development of the new numerical reconstruction method for defective sources. Section \ref{section4} presents a numerical study that demonstrates the performance and effectiveness of the proposed method. Finally, in Section \ref{section5}, we conclude and discuss potential
 future work.

\section{Uniqueness of solution and Floquet-Bloch transform based formulation }\label{ssection2}
We begin by establishing the uniqueness of the solution to the inverse problem. We then derive a Floquet-Bloch transform-based formulation of the problem and prove the corresponding uniqueness result.
\begin{theorem}
\label{thm:uniquess_Ips}
Suppose there are two sets of defective sources: $\widehat{\x}_{m}$ with $ m \in \widehat{\J} \subset \Z$, and $\widetilde{\x}_{m}$ with $ m \in \widetilde{\J} \subset \Z$, such that $\widehat{\x}_m \equiv \widetilde{\x}_m$ if $m\in \widehat{\J} \cap \widetilde{\J}$ and the corresponding outgoing solutions $\widehat{u}$ and $\widetilde{u}$ to the following problems  
 \begin{equation}\notag
     \Delta \widehat{u} + k^2 \widehat{u} = - \delta_{\per}(\cdot, \x_0)  -  \sum_{m \in \widehat{\J}} \widehat{\sigma}_{m} \delta(\cdot, \widehat{\x}_m)
 \end{equation}
and \begin{equation}\notag
     \Delta \widetilde{u} + k^2 \widetilde{u} = -\delta_{\per}(\cdot, \x_0)  -   \sum_{m \in \widetilde{\J}} \widetilde{\sigma}_{m} \delta(\cdot, \widetilde{\x}_m)
 \end{equation} coincide on $\Gamma_{t} \cup \Gamma_{-t}$. Then $ \widehat{\J} = \widetilde{\J}$ and $\widehat{\sigma}_m = \widetilde{\sigma}_m$, for all $m \in \widehat{\J}$. 
\end{theorem}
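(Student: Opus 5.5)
Set $w := \widehat{u} - \widetilde{u}$. The periodic part $G_{\per}(\cdot,\x_0)$ is common to both solutions, so by \eqref{form:v}
\[
w(x) = \sum_{m\in \widehat{\J}} \widehat{\sigma}_m \frac{i}{4} H^{(1)}_0(k|x-\widehat{\x}_m|) - \sum_{m\in \widetilde{\J}} \widetilde{\sigma}_m \frac{i}{4} H^{(1)}_0(k|x-\widetilde{\x}_m|).
\]
This is a finite sum of point-source fields located in $\Omega$. It satisfies the Helmholtz equation in $\R^2$ minus finitely many points of the strip $|x_2|<h$, and it satisfies the Sommerfeld radiation condition. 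By hypothesis $w=0$ on $\Gamma_t\cup\Gamma_{-t}$.

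The first step is to show that $w\equiv 0$ in the half-planes $\{x_2>t\}$ and $\{x_2<-t\}$. In $\{x_2>t\}$, $w$ is a radiating solution with zero Dirichlet data on $\Gamma_t$. Each Hankel term has the angular-spectrum representation $\frac{i}{4}H_0^{(1)}(k|x-y|) = \frac{i}{4\pi}\int_\R \frac{1}{\sqrt{k^2-\xi^2}} e^{i\xi(x_1-y_1)+i\sqrt{k^2-\xi^2}(x_2-y_2)}\,d\xi$ for $x_2>y_2$. So $w(\cdot,x_2)$ has Fourier transform $\hat w(\xi,x_2)=\hat w(\xi,t)e^{i\sqrt{k^2-\xi^2}(x_2-t)}$, where the square root is taken with nonnegative imaginary part, equivalently the upward propagating radiation condition. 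Since $w(\cdot,t)=0$, we get $\hat w(\cdot,t)=0$ and hence $w=0$ for $x_2>t$. The only technical point here is that $w(\cdot,t)$ is a tempered distribution; it decays like $|x_1|^{-1/2}$, so it is not in $L^2$, but it is in $L^\infty$. Alternatively one can invoke the standard uniqueness of the Dirichlet problem in a half-plane for radiating solutions (Rellich-type results of Chandler-Wilde and Zhang). The case $x_2<-t$ is handled the same way.

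Since $w$ is real-analytic in the connected set $\R^2\setminus(\{\widehat{\x}_m\}\cup\{\widetilde{\x}_m\})$, unique continuation gives $w\equiv 0$ there. Now fix an index $m\in\widehat{\J}\cup\widetilde{\J}$. Near $\widehat{\x}_m$ (respectively $\widetilde{\x}_m$, the same point if $m$ is in both sets), the other terms are smooth. Note that distinct indices give distinct points, because every source lies at $\x_0+(mL,0)^\top$. The term $\frac{i}{4}H_0^{(1)}(k|x-y|)$ behaves like $-\frac{1}{2\pi}\log|x-y|$. The coefficient of this logarithmic singularity in $w$ must vanish, so:
\begin{itemize}
\item if $m\in\widehat{\J}\cap\widetilde{\J}$, then $\widehat{\sigma}_m-\widetilde{\sigma}_m=0$;
\item if $m\in\widehat{\J}\setminus\widetilde{\J}$, then $\widehat{\sigma}_m=0$.
\end{itemize}
An equivalent way to state this is that $\Delta w + k^2 w$ as a distribution equals $-\sum(\cdots)\delta$, which must be zero. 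The second case contradicts the defining property $\sigma_m\neq0$ of defective sources (since $\gamma_m\ne1$), so $\widehat{\J}\setminus\widetilde{\J}=\emptyset$. By symmetry $\widehat{\J}=\widetilde{\J}$, and the first case gives equality of the intensities.

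The main obstacle is the first step, rigorously deducing $w=0$ above $\Gamma_t$ from vanishing boundary data on an unbounded line. I would handle it through the Fourier/angular-spectrum representation sketched above, taken in the distributional sense.
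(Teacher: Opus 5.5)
Your proposal is correct and takes the same route as the paper: the paper subtracts the two problems to eliminate $\delta_{\per}(\cdot,\x_0)$ and then omits the details, citing the standard free-space point-source uniqueness argument, and your angular-spectrum proof of half-plane uniqueness and your logarithmic-singularity comparison supply exactly those omitted details. One optional shortcut: since every source sits at height $\x_0^{(2)}$, the identity $\hat w(\cdot,t)=0$ already forces the trigonometric polynomial $\sum_m c_m e^{-i\xi mL}$ to vanish identically, where $c_m$ is the net coefficient at $\x_0+(mL,0)^{\top}$, so every $c_m$ vanishes without unique continuation.
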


\begin{proof}
{ By eliminating $\delta_{\per}$ from the two equations, the proof can be done using arguments similar to those in the free space case (see \cite{el2013stability, alves2009iterative}). Therefore, we omit the details here.}

\end{proof}

\begin{remark}
      Assume there exists $M>0$ such that all defective sources are contained within $\txt \Omega_M:=\left[ -\frac{ML}{2},\frac{ML}{2}\right]\times [-h,h].$
     The uniqueness of the solution remains valid  if we have Cauchy measurement on 
     $$\Gamma_{\pm t}^M := \left( -\dfrac{ML}{2},\dfrac{ML}{2}\right)\times \{ \pm t\}.$$
     The proof proceeds similarly, employing Holmgren's theorem in the domain $\Omega_M$.
     
\end{remark}

 In the following, we discuss the determination of the location and intensity of the defective sources. This is divided into two steps: the first step focuses on determining the source location within the period $\Omega_0$, and the second step focuses on identifying the set $\I$ of defective indices and their corresponding intensities. The investigation of both steps will be based on the use of Floquet-Bloch transform. 
The one-dimensional Floquet-Bloch transform of a function  
$$\varphi \in \S(\R):=\left\{ \psi \in \mathcal{C}^{\infty}(\mathbb{R}): \, \sup_{x\in \R} |x^{j}  \psi^{(\ell)}(x)| < \infty, \; \; \text{ for all} \; \; j, \ell \in \N \right\}$$ with period $L > 0$,  is defined by
\begin{equation}
\label{def:FBtransform}
\F \varphi(x; \xi): = \sum_{m \in \Z} \varphi(x + mL)e^{- i \xi mL}, \quad \; \text{for all } \xi \in \left(-\frac{\pi}{L}, {\frac{\pi}{L}}\right), \quad x \in \R.
\end{equation}
Note that, for a fixed $\txt \xi \in \left(-\frac{\pi}{L}, {\frac{\pi}{L}}\right)$, the function $\phi(x):= \F \varphi(x; \xi)$ is $\xi-$quasi-periodic with period $L$, i.e.,
$$
\phi(x + mL) = e^{i\xi mL}\phi(x), \; \; \text{for all } m \in \Z, \quad x \in \R.
$$ 
We refer to~\cite{kuchment_floquet_1993, coatleven:pastel-00649212} for a more detailed discussion of the Floquet-Bloch transform and related properties.  Now taking the Floquet-Bloch transform of \eqref{eq:direct:1s0} in the $x_1-$direction and denoting by $u_{\xi} : = \F u (\cdot; \xi),$ $\txt \xi \in \left(-\frac{\pi}{L}, {\frac{\pi}{L}}\right){\setminus  \{0\}},$ we obtain from direct calculations that $u_{\xi}$  satisfies 
\begin{equation}
\label{eq:FB:direct:1s-quasi}
\Delta u_{\xi} + k^2 u_\xi =  - \sum_{m \in \I}  \sigma_m e^{-i\xi mL} \delta(\cdot, \x_0), \quad \text{in } \Omega_0.
\end{equation}
Note that $\xi = 0$ is excluded since $\F\delta_\per(\cdot;\xi) = 0$ for all $\txt \xi \in \left(-\frac{\pi}{L}, {\frac{\pi}{L}}\right){\setminus  \{0\}}$.
In addition, the outgoing behavior of $u$ is now described by the Rayleigh expansion  condition for $u_\xi$ as
\begin{equation}
 \label{Cond:Rayleigh:per}
u_\xi(x) = \sum_{n \in \Z} \hat{u}_{\xi,(n)} e^{i \xi_{(n)} x_1 + i\beta_n|x_2|},\qquad \text{for }|x_2|\ge h,
\end{equation}
where  
$$\xi_{(n)} = \xi+\frac{2\pi}{L} n,\qquad \text{and }\qquad\beta_n = \begin{cases}
    \sqrt{k^2 - \xi_{(n)}^2},\hspace{0.9cm} k^2>\xi_{(n)}^2,\\ \notag
    i\sqrt{ \xi_{(n)}^2-k^2},\qquad k^2<\xi_{(n)}^2,
\end{cases} \qquad n\in \Z,$$
which describes the outgoing behavior of the quasi-periodic wave $u_\xi$. From \eqref{eq:FB:direct:1s-quasi}-\eqref{Cond:Rayleigh:per}, we obtain the following representation of $u_\xi,$
\begin{equation}
\label{sol:FB:direct:1s:quasi}
    u_{\xi}  = \sum_{m \in \I}  \sigma_m e^{-i\xi mL} G_{\xi}(\cdot,\x_0),
\end{equation}
where $G_{\xi}$ is the $\xi-$quasi-periodic Green's function defined as
$$G_\xi(x,y) =\dfrac{i}{2L}\sum_{n\in\Z}\dfrac{1}{\beta_n}e^{i\xi_{(n)}(x_1-y_1)+i\beta_n|x_2-y_2|},\qquad x,y\in \Omega_0, \,  x\neq y.$$
We consider the following quasi-periodic inverse problem.

 \vspace{0.3cm}
  \noindent{\bf Quasi-periodic inverse problem:}  
  Given $u_\xi$ on $$\Lambda_{\pm t}:=\left(-\frac{L}{2}, \frac{L}{2}\right)\times\{\pm t\},$$ for a finite number of $\txt \xi \in \left( -\frac{\pi}{L},\frac{\pi}{L}\right){\setminus  \{0\}}$,  determine the number of defective sources $N$, the indices $m \in \I$ corresponding to their locations $\x_0 + (mL,0)^{\top}$,  along with their intensities $\gamma_m$.

\vspace{0.5 cm}
{ The following theorem establishes the uniqueness of the solution to the problem using a finite number of $\xi$. Unlike the original inverse problem, the dataset used for the quasi-periodic problem under consideration differs from the original measurements because the periodic component has been eliminated. Furthermore, using only a finite number of $\xi$ makes the data even weaker than the dataset.}

\begin{theorem}
\label{thm:uniquenessFB} Suppose there are two sets of indices corresponding to the defective sources $\I_1\subset \Z$ and $\I_2\subset \Z$ with $N=|\I_1 \cup \I_2|$. Consider $\xi_j := \xi_0+j\Delta\xi,$
    with some $0<\Delta\xi\neq \tfrac{2\pi q}{L},\, q\in \mathbb{Q}$ such that $\txt \xi_j\in \left( -\frac{\pi}{L},\tfrac{\pi}{L}\right) {\setminus  \{0\}}$ for all $j=0,1,\ldots,N-1$.  Let $\widetilde{u}_{\xi_j}$ and $\widehat{u}_{\xi_j}$ be the $\xi_j-$quasi-periodic solutions to problem~\eqref{eq:FB:direct:1s-quasi}-\eqref{Cond:Rayleigh:per}, which implies 
\begin{equation}\notag
     \Delta \widetilde{u}_{\xi_j} + k^2 \widetilde{u}_{\xi_j}= - \sum_{n \in \I_1}  \widetilde{\sigma}_n e^{-i\xi_j nL} \delta(\cdot, \x_1), \qquad \text{in } \; \Omega_0
 \end{equation} and 
\begin{equation}\notag
     \Delta \widehat{u}_{\xi_j} + k^2 \widehat{u}_{\xi_j} = - \sum_{n\in \I_2}  \widehat{\sigma}_n e^{-i\xi_j nL} \delta(\cdot, \x_2), \qquad \text{in } \; \Omega_0
 \end{equation} 
 where $\x_1,\x_2\in\Omega_0$. 
    If $\widetilde{u}_{\xi_j}=\widehat{u}_{\xi_j}$ on $\Lambda_{ t} \cup \Lambda_{-t}$ for all $j=0,1,\ldots,N-1$, then $\x_1 =  \x_2$, $\I_1=\I_2$ and $\widetilde{\sigma}_n = \widehat{\sigma}_n$ for all $n\in \I_1$. 
\end{theorem}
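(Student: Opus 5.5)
We reduce the claim to two separate steps: first recover the source location inside the cell, then recover the coefficients. For each $j$, set $c_j := \sum_{n\in\I_1}\widetilde{\sigma}_n e^{-i\xi_j nL}$ and $d_j := \sum_{n\in\I_2}\widehat{\sigma}_n e^{-i\xi_j nL}$. By \eqref{sol:FB:direct:1s:quasi},
\[
\widetilde{u}_{\xi_j}=c_j G_{\xi_j}(\cdot,\x_1),\qquad \widehat{u}_{\xi_j}=d_j G_{\xi_j}(\cdot,\x_2).
\]

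\textbf{Step 1: location and the values $c_j=d_j$.} Let $w := \widetilde{u}_{\xi_j}-\widehat{u}_{\xi_j}$. This function is $\xi_j$-quasi-periodic and satisfies the Rayleigh condition \eqref{Cond:Rayleigh:per}. Its restriction to $\Lambda_{\pm t}$ is periodized through quasi-periodicity, so the data vanish on all of $\Gamma_{\pm t}$. Above $\Gamma_t$, $w$ is an outgoing Rayleigh series whose trace on $\Gamma_t$ is zero. Every Rayleigh coefficient therefore vanishes, since $e^{i\beta_n t}\neq 0$, and so $w\equiv 0$ for $x_2>t$; the same holds for $x_2<-t$. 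Unique continuation (analyticity of Helmholtz solutions off the sources) then gives $w\equiv 0$ in $\Omega_0\setminus\{\x_1,\x_2\}$.

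Suppose some $c_j\neq 0$ and $\x_1\neq \x_2$. Then $w$ keeps the logarithmic singularity of $G_{\xi_j}(\cdot,\x_1)$ at $\x_1$, and this singularity cannot be cancelled by the second term, which is smooth near $\x_1$. This is a contradiction. Hence, whenever either $c_j$ or $d_j$ is nonzero, $\x_1=\x_2$ and $c_j=d_j$. If $c_j=d_j=0$ for all $j$, the equality $c_j=d_j$ holds trivially, but the location is not determined.

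Unless both sets of intensities are trivial, some $c_j$ must be nonzero; this follows from Step 2, because the exponential system below is nonsingular. In the degenerate case where the sources vanish, the location is meaningless, so we restrict to the meaningful case.

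\textbf{Step 2: coefficients from an exponential system.} Write $\I_1\cup\I_2=\{n_1,\dots,n_N\}$ and extend $\widetilde{\sigma}$ and $\widehat{\sigma}$ by zero off their index sets. Let $a_\ell := \widetilde{\sigma}_{n_\ell}-\widehat{\sigma}_{n_\ell}$. Step 1 gives
\[
\sum_{\ell=1}^N a_\ell e^{-i\xi_0 n_\ell L}\, z_\ell^{\,j}=0,\qquad z_\ell:=e^{-i\Delta\xi\, n_\ell L},\quad j=0,\dots,N-1.
\]
The coefficient matrix is a Vandermonde matrix $(z_\ell^j)$ multiplied by a diagonal matrix of unimodular factors. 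It is invertible exactly when the $z_\ell$ are pairwise distinct.

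\textbf{Main obstacle: distinctness of the $z_\ell$.} Suppose $z_\ell=z_{\ell'}$ with $\ell\neq\ell'$. Then $\Delta\xi\,(n_\ell-n_{\ell'})L\in 2\pi\Z$, which forces $\Delta\xi = 2\pi p/\bigl(L(n_\ell-n_{\ell'})\bigr)$, i.e. $\Delta\xi=\tfrac{2\pi q}{L}$ with $q$ rational. This is excluded by hypothesis. Hence the system has only the trivial solution $a_\ell=0$, so $\widetilde{\sigma}_n=\widehat{\sigma}_n$ for all $n$.

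Since the defective indices are precisely those with $\sigma_n\neq 0$, we conclude $\I_1=\I_2$. The same Vandermonde argument also shows that if $\I_1\neq\emptyset$, then not all $c_j$ vanish, which closes the location gap left in Step 1.
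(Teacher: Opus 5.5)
Your proposal is correct and follows the paper's route: unique continuation gives $\x_1=\x_2$ and $c_j=d_j$; the exponential matrix factors as a transposed Vandermonde matrix times a unimodular diagonal matrix, with distinct nodes guaranteed by the irrationality of $\Delta\xi L/(2\pi)$; and nonvanishing defective intensities then force $\I_1=\I_2$. You are more careful than the paper, which only says Step 1 is ``similar to Theorem 1'' and never notes that the location is determined only once the Vandermonde argument shows some $c_j\neq 0$.
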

\begin{proof}
    The proof of $\x_1 = \x_2$ and 
    \begin{equation}\label{eq:sum}
        \sum_{n \in \I_1}  \widetilde{\sigma}_n e^{-i\xi_j nL} = \sum_{n \in \I_2}  \widehat{\sigma}_n e^{-i\xi_j nL}, \quad \text{for all } j=0,1,\ldots,N-1,
    \end{equation}
     is similar to the proof of Theorem  \ref{thm:uniquess_Ips}, relying on the unique continuation principle. 
    Let $\I = \I_1\cup \I_2$  and denote
    $$\sigma_n = \begin{cases}
        \widetilde{\sigma}_n-\widehat{\sigma}_n,\qquad n\in \I_1 \cap \I_2,\\
        \widetilde{\sigma}_n,\hspace{1.65 cm} n\in \I_1 \setminus \I_2,\\ \notag
        -\widehat{\sigma}_n,\hspace{1.35 cm} n\in \I_2\setminus \I_1.
    \end{cases}$$
    Then \eqref{eq:sum} can be rewritten as
    $$\sum_{n\in \I}\sigma_n e^{-i\xi_j nL}=0,\qquad \text{for all } j=0,1,\ldots,N-1.$$
    Consider the following $N\times N$ matrix
    \begin{align}\label{matrixM}
      & \mathcal{M}=\begin{bmatrix}
e^{-i\xi_0 n_1 L} & e^{-i\xi_0 n_2L}&\ldots & e^{-i\xi_0 n_N L}\\
e^{-i\xi_1 n_1 L} & e^{-i\xi_1 n_2L}&\ldots & e^{-i\xi_1 n_N L}\\
 \vdots & \vdots&\ddots & \vdots \\
 e^{-i\xi_{N-1} n_1 L} & e^{-i\xi_{N-1} n_2L}&\ldots & e^{-i\xi_{N-1} n_N L}
\end{bmatrix}\\ \notag
& \hspace{4.5 cm}
    =\begin{bmatrix}
1 & 1&\ldots & 1\\
r_1 & r_2&\ldots & r_{N}\\
 \vdots & \vdots&\ddots & \vdots \\
 r_1^{N-1} & r_2^{N-1}&\ldots & r_N^{N-1}
\end{bmatrix}\begin{bmatrix}
e^{-i\xi_0 n_1L} & 0&\ldots & 0\\
0 & e^{-i\xi_0 n_2L}&\ldots & 0\\
 \vdots & \vdots&\ddots & \vdots \\
 0 & 0&\ldots & e^{-i\xi_0 n_N L}
\end{bmatrix},
\end{align}
where $r_\ell=e^{-i\Delta\xi n_\ell L}$ with $n_\ell\in \I$ for $\ell=1,\ldots,N$.
    The first matrix on the right-hand side is the transpose of Vandermonde matrix. Moreover, with $\Delta\xi\neq \frac{2\pi q}{L},\,q\in \mathbb{Q}$, $r_\ell$ are distinct since
    $$n_{i_1} \neq n_{i_2}+\dfrac{2\pi}{\Delta\xi L}\rho,\qquad \text{for all }\rho\in \mathbb{Z}, \text{ and }i_1,i_2=1,\ldots,N.$$
    Thus, $\mathcal{M}$ is invertible and so its columns are linearly independent. This implies that $\sigma_n=0$ for all $n\in \I$.
     However, $\widetilde{\sigma}_n$ and $\widehat{\sigma}_n$ are nonzero for all $n\in \I_1$ and all $n\in\I_2$, respectively, so $\I_1 \setminus \I_2$ and $\I_2\setminus \I_1$ must be empty sets.  Therefore, $\I = \I_1=\I_2$ and $\widetilde{\sigma}_n=\widehat{\sigma}_n$ for all $n\in \I$.
\end{proof}

\section{Numerical reconstruction method}\label{section3}
In this section, we develop a new numerical method that combines a stable imaging function and an algebraic technique to recover the number of defects, their locations, and the corresponding intensities in the quasi-periodic inverse problem discussed in the previous section.

\subsection{A stable imaging function for determining $\x_0$}
Recall that $\Omega_0=\left(-\tfrac{L}{2}, \tfrac{L}{2}\right)\times (-h,h)$. Let $\txt \xi \in \left(-\frac{\pi}{L}, \frac{\pi}{L}\right) { \setminus  \{0\}}$. Inspired by~\cite{nguyen2023new}, for $z\in \Omega_0$,  we define the following imaging function
\begin{equation}\label{eq:Iz}
    I_\xi(z)=\int_{\Lambda_t\cup\Lambda_{-t}}  \Phi_\xi(x,z)u_\xi(x)ds(x),
\end{equation}
where 
$$\Phi_\xi(x,z)=\dfrac{-i}{2L}\sum_{j\in \Z \atop \beta_j>0} e^{-i\xi_{(j)}(x_1-z_1)-i\beta_j|x_2-z_2|},\qquad x,z\in \Omega_0.$$
This imaging function aims to determine $\x_0$ with sampling points $z$. The behavior of $I_\xi(z)$ is analyzed in the following theorem.
\begin{theorem}\label{thm:imaging}
    The imaging function satisfies
    \begin{equation}\label{thm:Iz}
        I_\xi(z)=\sum_{n\in \I} \sigma_n e^{-i\xi nL}\K_\xi(z,\x_0),
    \end{equation}
    where 
    $$\K_\xi(z, \x_0) = \sum_{j\in\Z \atop \beta_j >0} \dfrac{1}{2L\beta_j}e^{i\xi_{(j)}\left(z_1-\x_{0}^{(1)}\right)}\cos\left(\beta_j\left(z_2 -\x_{0}^{(2)}\right)\right).$$
   { Furthermore, for $\xi\in \left( -\tfrac{\pi}{L},\tfrac{\pi}{L}\right) \setminus \{0\}$ if 
    $$ k\ge \left|\xi +\frac{2\pi}{L}\right|,\qquad \text{and }\qquad  h<\dfrac{\pi}{2\sqrt{k^2-\xi^2}},$$
     then $|I_\xi(z)|$ attains its global maximum on $\Omega_0$ only at $z=\x_0$.}
\end{theorem}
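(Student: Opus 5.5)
The plan is to substitute the representation \eqref{sol:FB:direct:1s:quasi} of $u_\xi$ into \eqref{eq:Iz} and evaluate the integrals mode by mode. Write $c_\xi:=\sum_{n\in\I}\sigma_n e^{-i\xi nL}$ and $d=z-\x_0$. On $\Lambda_{\pm t}$ we have $|x_2|=t>h$, so both $G_\xi(x,\x_0)$ and $\Phi_\xi(x,z)$ are given by their modal series, with $|x_2-y_2|=t\mp y_2$ for $y\in\Omega_0$. Integrating in $x_1$ over $(-L/2,L/2)$ and using the orthogonality relation
\[
\int_{-L/2}^{L/2}e^{-i\xi_{(j)}x_1}e^{i\xi_{(n)}x_1}\,dx_1=L\,\delta_{jn},
\]
only the diagonal terms $n=j$ with $\beta_j>0$ survive. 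In particular, all evanescent modes of $G_\xi$ drop out, because $\Phi_\xi$ contains only propagating modes. Interchanging sum and integral is harmless here: for fixed $t>h$ the series for $G_\xi$ converges uniformly on $\Lambda_{\pm t}$ thanks to the exponential decay of the evanescent terms, and $\Phi_\xi$ is a finite sum.

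For the $x_2$-dependence, on $\Lambda_t$ the product of the two exponentials is
\[
e^{-i\beta_j(t-z_2)}\,e^{i\beta_j(t-\x_0^{(2)})}=e^{i\beta_j d_2},
\]
and on $\Lambda_{-t}$ it is $e^{-i\beta_j d_2}$. Their sum is $2\cos(\beta_j d_2)$. The constant is $\frac{-i}{2L}\cdot\frac{i}{2L\beta_j}\cdot L\cdot 2=\frac{1}{2L\beta_j}$, and the $x_1$-phase left over is $e^{i\xi_{(j)}d_1}$. This gives \eqref{thm:Iz} with the stated kernel $\K_\xi$.

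For the maximum statement, I would assume $c_\xi\neq 0$; otherwise $I_\xi\equiv 0$ and the statement must be read under this nondegeneracy. Then
\[
|I_\xi(z)|=|c_\xi|\,|\K_\xi(z,\x_0)|\le |c_\xi|\sum_{\beta_j>0}\frac{1}{2L\beta_j},
\]
with equality at $z=\x_0$. So it remains to show equality forces $d=0$, which is the main step.

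Equality in the triangle inequality requires two things: $|\cos(\beta_j d_2)|=1$ for every propagating $j$, and all the complex numbers $\cos(\beta_j d_2)e^{i\xi_{(j)}d_1}$ share a common argument. Since $\xi\in(-\pi/L,\pi/L)$ and $\xi\neq 0$, we have $|\xi|<k$ (implied by the hypothesis), so $j=0$ is propagating with $\beta_0=\sqrt{k^2-\xi^2}$. Because $|d_2|<2h$, the hypothesis $h<\pi/(2\beta_0)$ gives $|\beta_0 d_2|<\pi$. Then $|\cos(\beta_0 d_2)|=1$ forces $d_2=0$, and hence all cosines equal $1$. Next, the hypothesis $k\ge|\xi+2\pi/L|$ makes $j=1$ propagating as well; if equality holds, we still get $\beta_1\ge 0$, and I would handle the borderline $\beta_1=0$ by noting $\theta_n\neq0$ is assumed, so strict inequality holds. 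Equal arguments for $j=0,1$ then require $e^{i2\pi d_1/L}=1$. Since $|d_1|<L$, this gives $d_1=0$. Hence $z=\x_0$ is the unique maximizer.
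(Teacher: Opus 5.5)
Your proposal is correct and follows essentially the same route as the paper: the same orthogonality computation on $\Lambda_{\pm t}$, the same triangle-inequality bound, the mode $j=0$ to force $d_2=0$ and the modes $j=0,1$ to force $d_1=0$. Your common-argument formulation of the equality case and your explicit caveat $c_\xi=\Theta_\xi\neq 0$ are in fact cleaner than the paper's ``$\pm1$ or $\pm i$'' case split. One small correction: the exclusion of $\beta_1=0$ should be justified by the fact that $G_\xi$ (hence $u_\xi$) is only defined when $\beta_n\neq 0$ for all $n$, because of the $1/\beta_n$ factors. It does not follow from $\theta_n\neq 0$, which concerns only $\xi=0$.
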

\begin{proof}
By \eqref{sol:FB:direct:1s:quasi}, we get that
\begin{equation}
    \label{eq:thm3_0}
    I_\xi(z)=\sum_{n\in\I} \sigma_n e^{-i\xi nL}\int_{\Lambda_t\cup\Lambda_{-t}}  \Phi_\xi(x,z)G_\xi(x,\x_0)ds(x).
\end{equation}
Note that $\{e^{i\xi_{(j)}t}\}_{j\in\Z}$ is an orthogonal basis in $L^2(-L/2,L/2)$ which means
$$\int_{-\frac{L}{2}}^{\frac{L}{2}} e^{i \xi_{(j_1)} t} \overline{e^{i \xi_{(j_2)}t}}dt = \begin{cases}
    L,\hspace{0.5 cm}  \text{if }j_1 = j_2, \\ \notag
     0,\hspace{0.6 cm}  \text{if }j_1 \neq j_2.
\end{cases}$$
Thus,
\begin{align}
    \notag
    &\int_{\Lambda_t} \Phi_\xi(x,z)G_\xi(x,\x_0)ds(x) \\ \notag
    &\hspace{3 cm}= \dfrac{1}{4L^2}\int_{-\frac{L}{2}}^{\frac{L}{2}} \sum_{j\in \Z \atop \beta_j>0} e^{-i\xi_{(j)}(x_1-z_1)-i\beta_j(t-z_2)} \sum_{l\in\Z}\dfrac{1}{\beta_l}e^{i\xi_{(l)}\left(x_1-\x_0^{(1)}\right)+i\beta_l\left(t-\x_0^{(2)}\right)} dx_1 \\ \label{eq:thm3_1}
    & \hspace{3 cm}= \sum_{j\in \Z \atop \beta_j>0}\dfrac{1}{4L\beta_j}  e^{i\xi_{(j)}\left(z_1-\x_0^{(1)}\right)+i\beta_j\left(z_2-\x_0^{(2)}\right)}. 
\end{align}
Similarly, on $\Lambda_{-t}$, one can also derive that
\begin{equation}\label{eq:thm3_2}
    \int_{\Lambda_{-t}} \Phi_\xi(x,z)G_\xi(x,\x_0)ds(x) = \sum_{j\in\Z \atop \beta_j>0} \dfrac{1}{4L\beta_j}e^{i\xi_{(j)}\left(z_1 -\x_0^{(1)}\right)-i\beta_j\left(z_2-\x_0^{(2)}\right)}.
\end{equation}
Substituting \eqref{eq:thm3_1} and \eqref{eq:thm3_2} into \eqref{eq:thm3_0} yields \eqref{thm:Iz}.

\smallskip 

{

\noindent To show the global maximum of $|I_\xi(z)|$, we first see from \eqref{thm:Iz} that
\begin{equation}
    |I_\xi(z)| = \left|\sum_{n\in \I} \sigma_n e^{-i\xi nL}\right| \left|\K_\xi(z, \x_0) \right|. 
\end{equation} Thus, it is equivalent to show that $ |\K_\xi(z, \x_0) |$ attains its global maximum uniquely at $z = \x_0$. Indeed, for each $ z\in \Omega_0$, 
\begin{align}
    \notag
    |\K_\xi(z)| \leq  \sum_{j\in\Z \atop \beta_j >0} \dfrac{1}{2L\beta_j}\left| e^{i\xi_{(j)}\left(z_1-\x_{0}^{(1)}\right)}\cos\left(\beta_j\left(z_2 -\x_{0}^{(2)}\right)\right)\right| 
     \le  \sum_{j\in\Z \atop \beta_j >0} \dfrac{1}{2L\beta_j}.
\end{align}
The maximum of $\left| I_\xi(z)\right|$ is obtained at $z=\x_0$.  Suppose that there exists a point $\x_* \in \Omega_0$ such that $|I_\xi(z)|$ also attains a maximum at $z=\x_*$. Then $\cos\left(\beta_j\left(\x_{*}^{(2)} - \x_{0}^{(2)}\right)\right)=\pm 1$ for all $j\in\Z$ with $\beta_j>0$ and
\begin{equation}\notag
    e^{i\xi_{(j)}\left(\x_{*}^{(1)} - \x_{0}^{(1)}\right)} = \pm 1 \, \text{ for all } j\in \Z, \, \beta_j>0 \quad \text{ or } \quad e^{i\xi_{(j)}\left(\x_{*}^{(1)} - \x_{0}^{(1)}\right)} = \pm i \, \text{ for all } j\in \Z, \, \beta_j>0.
\end{equation}
We first consider the $x_2$-direction.
For all $j\in\Z$ with $\beta_j>0$, we have
$$\beta_j\left(\x_{0}^{(2)} -\x_{*}^{(2)}\right)=n_2 \pi, \qquad n_2\in \Z.$$
 Taking $j=0$ and since $h<\tfrac{\pi}{2\sqrt{k^2-\xi^2}}$, we obtain that 
$$\left|\x_{0}^{(2)}-\x_*^{(2)}\right|=\dfrac{|n_2| \pi}{\sqrt{k^2-\xi^2}}>|n_2| 2h.$$
Thus, if $n_2\neq0$, then $\left|\x_{0}^{(2)}-\x_*^{(2)}\right|\ge 2h,$
which contradicts  the assumption that $\x_0^{(2)}, \x_*^{(2)} \in (-h, h)$. Therefore, $n_2=0$ and so $x_*^{(2)}=\x_0^{(2)}$. This implies that  $\cos\left(\beta_j\left(\x_{*}^{(2)} - \x_{0}^{(2)}\right)\right)=1$ for all $j\in\Z$ with $\beta_j>0$ and so
\begin{equation}\notag
    e^{i\xi_{(j)}\left(\x_{*}^{(1)} - \x_{0}^{(1)}\right)} = 1 \, \text{ for all } j\in \Z, \, \beta_j>0 \quad \text{ or } \quad e^{i\xi_{(j)}\left(\x_{*}^{(1)} - \x_{0}^{(1)}\right)} = -1 \, \text{ for all } j\in \Z, \, \beta_j>0,
\end{equation}
or 
\begin{equation}\notag
    e^{i\xi_{(j)}\left(\x_{*}^{(1)} - \x_{0}^{(1)}\right)} = i \, \text{ for all } j\in \Z, \, \beta_j>0 \quad \text{ or } \quad e^{i\xi_{(j)}\left(\x_{*}^{(1)} - \x_{0}^{(1)}\right)} = -i \, \text{ for all } j\in \Z, \, \beta_j>0.
\end{equation}
 For each case stated above, we take $j = 0$ and $j = 1$. Then
\[
    \xi \left(\x_{0}^{(1)} -\x_{*}^{(1)}\right) = \theta + 2n_1 \pi, \quad\text{and}  \quad \left(\xi + \frac{2\pi}{L}\right)\left(\x_{0}^{(1)} -\x_{*}^{(1)}\right) = \theta + 2n_1
'\pi,
\] where $\theta$ is the principal argument of $\pm 1$ or $\pm i$ and $n_1,\, n_1' \in \Z$. 
Then
\[
\frac{2\pi}{L} \left(\x_{0}^{(1)} -\x_{*}^{(1)}\right) = (n_1' - n_1) 2\pi,
\]which implies 
\[
    \left|\x_{0}^{(1)} -\x_{*}^{(1)}\right| =\frac{|(n_1' - n_1) 2\pi|L }{2\pi}=|n_1'-n_1|L.
\]
If $n_1'-n_1\neq 0$, then $\left|\x_{0}^{(1)}-\x_*^{(1)}\right|\ge L,$
which contradicts  the assumption that $\x_0^{(1)}, \x_*^{(1)} \in (-L/2, L/2)$. Therefore, $n_1'-n_1=0$ and so $x_*^{(1)}=\x_0^{(1)}$. Combining both components, we conclude that $|I_\xi(z)|$ attains its unique maximum at $z=\x_0$.

}
\end{proof}

{

 Theorem \ref{thm:imaging} provides a complete characterization of the source position $\x_0$ in the period $\Omega_0$ using $|I_{\xi}(z,\x_0)|$: the maximum of $|I_{\xi}(z,\x_0)|$ is attained uniquely at $z=\x_0$. Furthermore, this maximum forms a sharp peak, as $|I_{\xi}(z,\x_0)|$ remains small for $z \neq \x_0$. This property is essential for our numerical experiments, since it allows us to clearly distinguish the true source location even with oscillatory background.

This peak property can be explained by that of $|\K_\xi(z,\x_0)|$.
Figure \ref{figure1} illustrates that $|\K_\xi(z,\x_0)|$ has a significant peak at $z = \x_0$ and remains significantly smaller  as $z$ is away from $\x_0$. 
The decaying behavior can be explained by considering the real and imaginary parts of $\K_\xi(z,\x_0)$.
For the real part,
$$\text{Re}[\K_\xi(z,\x_0) ] = \sum_{j\in\Z \atop \beta_j >0} \dfrac{1}{2L\beta_j}\cos\left(\xi_{(j)}\left(z_1 -\x_{0}^{(1)}\right)\right)\cos\left(\beta_j\left(z_2 -\x_{0}^{(2)}\right)\right).$$
When $z \neq \x_0$, the cosine terms in the summation oscillate with different frequencies, causing cancellation in the summation. At the same time, for the imaginary part,
$$\text{Im}[\K_\xi(z,\x_0) ] = \sum_{j\in\Z \atop \beta_j >0} \dfrac{1}{2L\beta_j}\sin\left(\xi_{(j)}\left(z_1 -\x_{0}^{(1)}\right)\right)\cos\left(\beta_j\left(z_2 -\x_{0}^{(2)}\right)\right),$$
when $z\neq \x_0$, the sine and cosine terms oscillate at varying frequencies, again leading to mutual cancellation. Therefore, the magnitude of $\K_\xi(z,\x_0)$ remains significantly smaller when $z$ moves away from $\x_0$.
}

\begin{figure}[H]
    \centering
    \begin{subfigure}{0.32\textwidth}
        \centering
        \includegraphics[width=\textwidth]{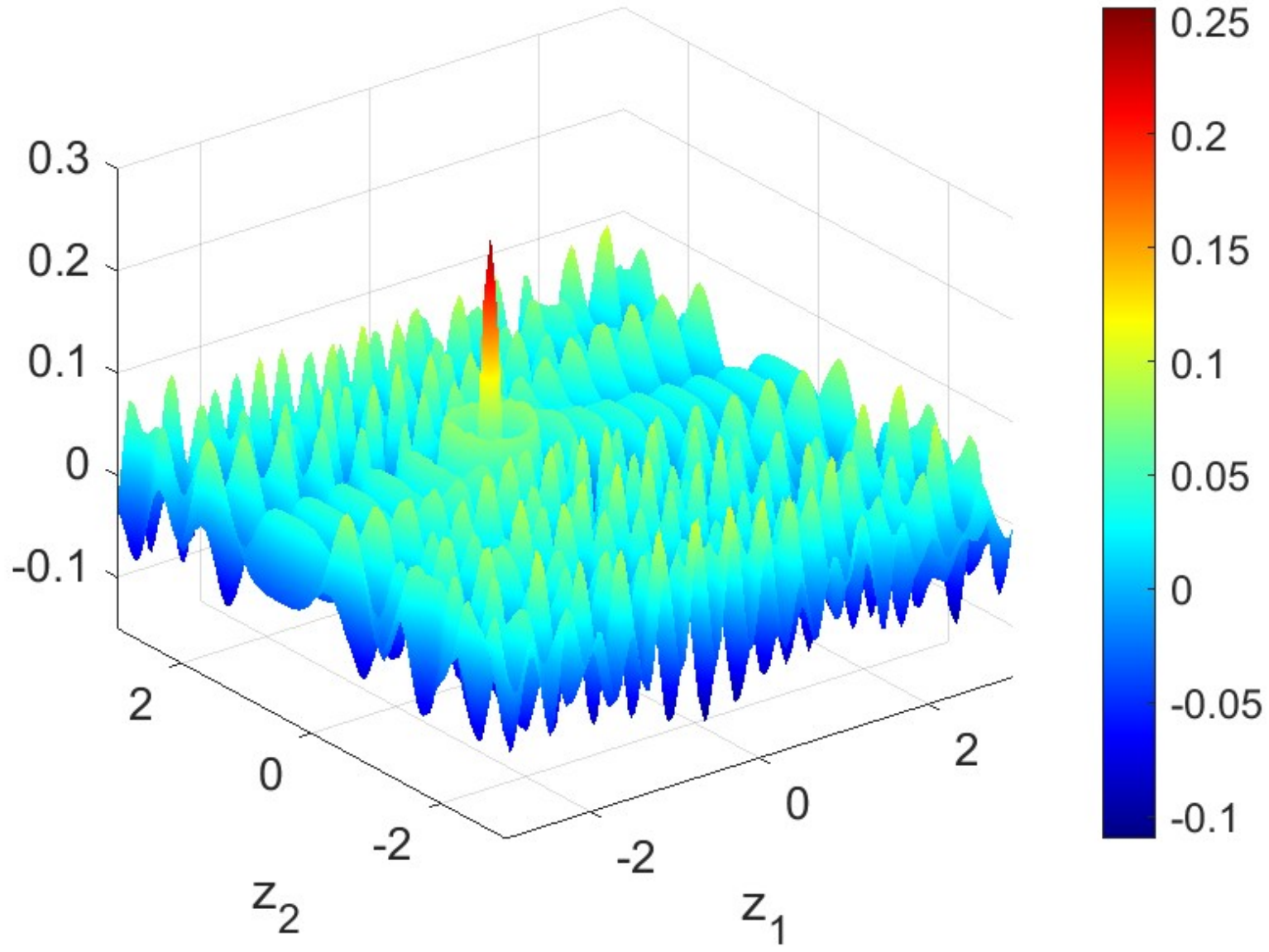}
        \caption{Re$\,[\K_\xi(z,\x_0)]$}
    \end{subfigure}
    \hfill
    \begin{subfigure}{0.32\textwidth}
        \centering
        \includegraphics[width=\textwidth]{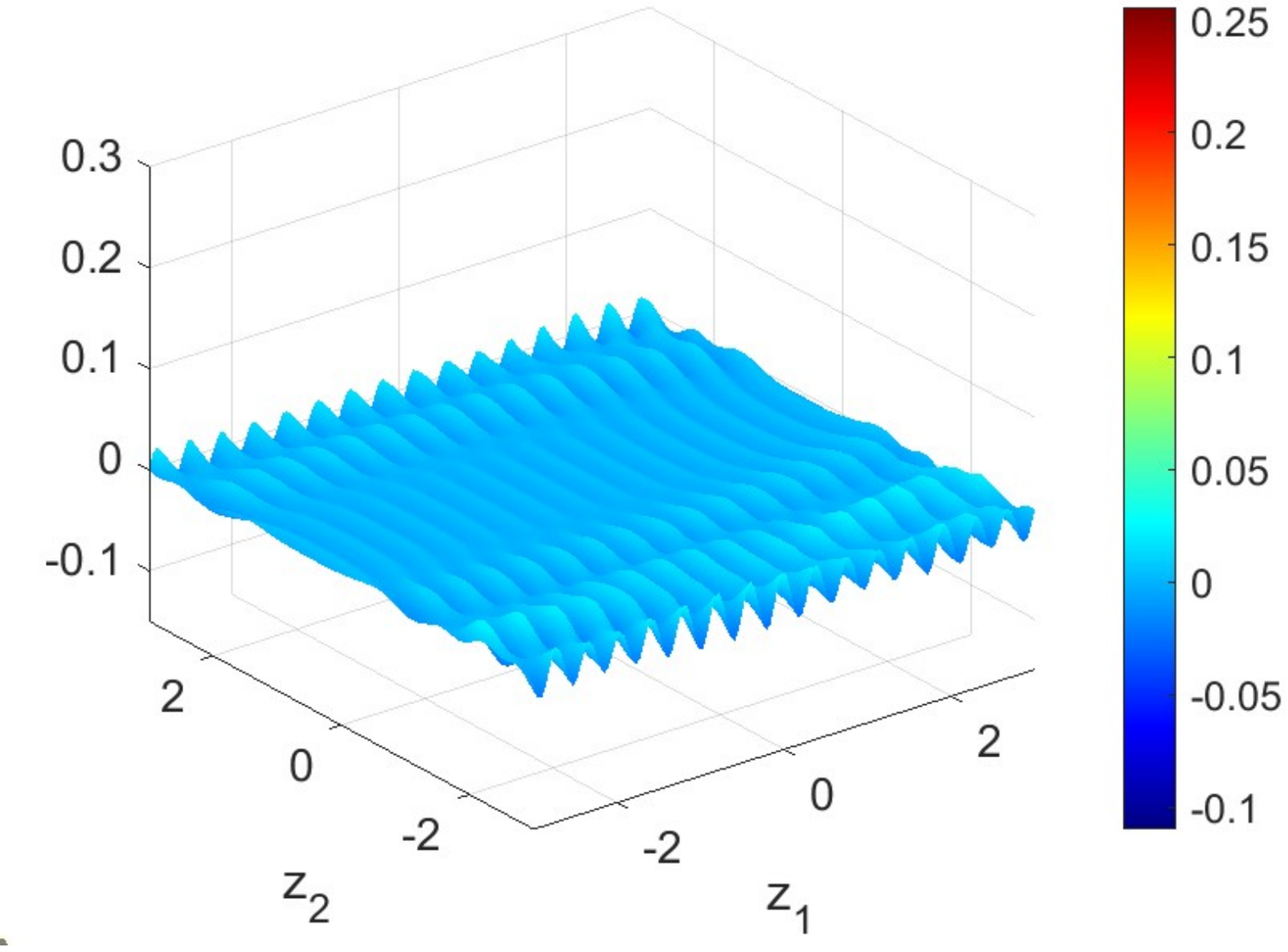}
        \caption{Im$\,[\K_\xi(z,\x_0)]$}
    \end{subfigure}
    \hfill
    \begin{subfigure}{0.32\textwidth}
        \centering
        \includegraphics[width=\textwidth]{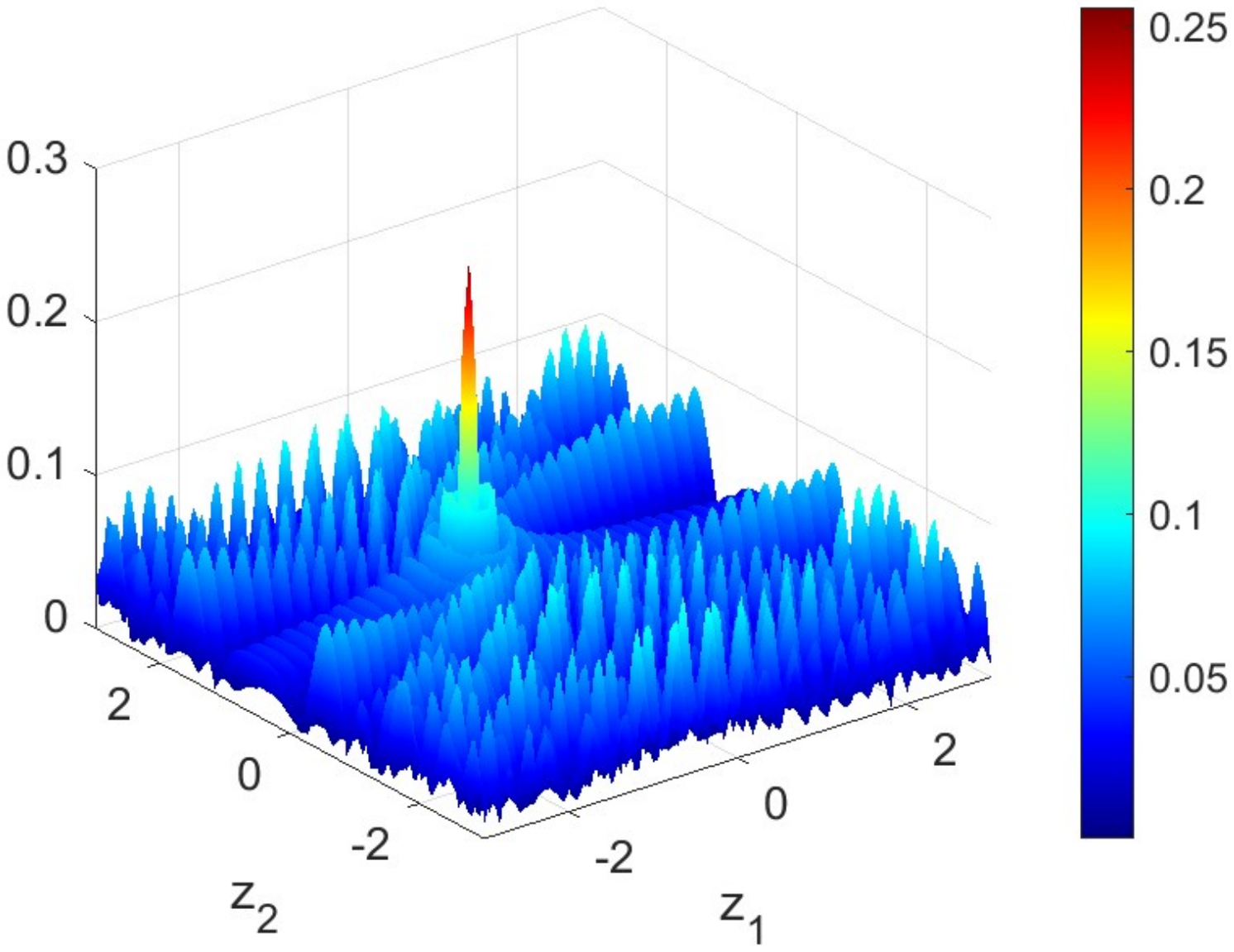}
        \caption{$|\K_\xi(z,\x_0)|$}
    \end{subfigure}

    \caption{$\text{Re}[\K_\xi(z,\x_0)],\ \text{Im}[\K_\xi(z,\x_0)],\ \text{and}\ |\K_\xi(z,\x_0)|$  
     at $\x_0 = (-0.5,\,0.5)$ with $\xi = 0.5$, $L=6$, and $k=16.5$.}
    \label{figure1}
\end{figure}

{
In practice, we use $|I_\xi|^p$, where the parameter $p>0$, to identify $\x_0$ and sharpen the peak.
}
Once $\x_0$ is determined, we denote 
\begin{equation}
\label{Theta}
\Theta_\xi = \sum_{n\in \I} \sigma_n e^{-i\xi nL},
\end{equation}
which can then be computed as
\begin{equation*}
    \Theta_\xi = \dfrac{I_\xi(\x_0)}{\K_\xi(\x_0,\x_0)}.
\end{equation*}
This $\Theta_\xi$ enables the detection of both locations and intensities of the defective sources, as demonstrated in the computational algorithm.

\subsection{Determination of $N$, $m$, and $\gamma_m$}
This section introduces a systematic approach, utilizing the imaging function $I_\xi(z)$ and algebraic techniques, to determine the number of defective units $N$, the indices $m \in \I$, and their associated intensities $\gamma_m$.

First, we develop the theorem to help determine the number of defective units. We employ a finite set of $\xi_j$ as in Theorem 
\ref{thm:uniquenessFB},
$$\xi_j = \xi_0 + j\Delta\xi,\qquad j=1,2,\ldots$$
with $0<
\Delta\xi\neq \tfrac{2\pi q}{L},\, q\in \mathbb{Q}$ such that $\txt \xi_j\in \left( -\frac{\pi}{L},\frac{\pi}{L}\right){ \setminus  \{0\}}$ for all $j$.
For $n=1,2,\ldots,N$, we denote further
$$a_n = (\gamma_{m_n}-1) e^{-i\xi_0 m_n L}, \qquad \text{and }\qquad r_n = e^{-i\Delta\xi m_nL}.$$
Recall from~\eqref{Theta} that $
    \Theta_\xi = \sum_{n\in \I} \sigma_n e^{-i\xi nL}$. Then
\begin{equation}\notag
    \Theta_{\xi_j}=a_1 r_1^j + a_2r_2^j+\ldots a_N r_N^j,
\end{equation}
where $N=|\I|$ and $m_n\in \I$.

\begin{theorem}\label{thm:nd} For $M\in \N^*$, define an  $M\times M$ matrix 
$$D_M:=\begin{bmatrix}
\Theta_{\xi_{M-1}} & \ldots & \Theta_{\xi_{0}}\\
\Theta_{\xi_{M}} & \ldots & \Theta_{\xi_{1}} \\ 
 \vdots & \ddots & \vdots \\
 \Theta_{\xi_{2M-2}} & \ldots & \Theta_{\xi_{M-1}}
\end{bmatrix}.$$
Then $ D_M$ are invertible  for all $M\le N$ and $D_M$ are singular otherwise if and only if there are $N$ defective sources.
\end{theorem}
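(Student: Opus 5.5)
The plan is to factor $D_M$ through Vandermonde-type matrices built from the nodes $r_1,\dots,r_N$ and read off the rank. Using $\Theta_{\xi_j}=\sum_{n=1}^N a_n r_n^j$, the $(p,q)$ entry of $D_M$ (rows $p=0,\dots,M-1$, columns $q=0,\dots,M-1$) is
$$\Theta_{\xi_{p+M-1-q}}=\sum_{n=1}^N a_n r_n^{p}\, r_n^{M-1-q}.$$
Hence
$$D_M = V_M\, \mathrm{diag}(a_1,\dots,a_N)\, W_M^{\top},$$
where $V_M$ is the $M\times N$ matrix with entries $(V_M)_{p,n}=r_n^{p}$, and $W_M$ is the $M\times N$ matrix with $(W_M)_{q,n}=r_n^{M-1-q}$. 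The matrix $W_M$ is $V_M$ with its rows reversed, so the two have the same rank.

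The first step is to record the facts needed about the nodes and weights, exactly as in the proof of Theorem~\ref{thm:uniquenessFB}. Since $\Delta\xi\neq 2\pi q/L$ with $q$ rational and the indices $m_n$ are distinct integers, the $r_n$ are pairwise distinct. Each $a_n=\sigma_{m_n}e^{-i\xi_0 m_nL}$ is nonzero because $\gamma_{m_n}\neq1$. By the Vandermonde determinant, $V_M$ then has rank $\min(M,N)$, and so does $W_M$.

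With these facts the rank computation splits into two cases.

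\emph{Case $M\le N$.} Take the leading $M\times M$ block. Then $D_M=\sum_{n} a_n v_n w_n^{\top}$, and its determinant can be expanded by Cauchy--Binet:
$$\det D_M=\sum_{S\subset\{1,\dots,N\},\,|S|=M}\det V_M[S]\,\Big(\prod_{n\in S}a_n\Big)\det W_M[S].$$
Each term equals $\pm\prod_{n\in S}a_n\,\prod_{i<j\in S}(r_j-r_i)^2$. For $M=N$ there is a single term, which is nonzero, so $D_N$ is invertible. For $M<N$, invertibility is \emph{not} automatic, because the terms of the sum could cancel. This is the main obstacle. My first attempt would be to check whether the paper intends generic position, or whether a further hypothesis rules out cancellation. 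Failing that, I would at least show that $\operatorname{rank} D_M = M$ for almost every choice of $\xi_0$. The determinant is a nontrivial exponential polynomial in $\xi_0$, since $a_n$ carries the factor $e^{-i\xi_0 m_nL}$ and the distinct subsets $S$ produce distinct frequencies $\sum_{n\in S}m_n$ generically. I would flag this step as the one needing care.

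\emph{Case $M>N$.} Here $\operatorname{rank} D_M\le N<M$, so $D_M$ is singular.

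For the converse, suppose the invertibility pattern holds: $D_M$ is invertible for $M\le N'$ and singular otherwise. The forward direction applied to the true number of defective sources $N$ gives that $D_N$ is invertible and $D_{N+1}$ is singular. Matching this against the assumed pattern forces $N'=N$. So the pattern determines $N$ as the largest $M$ with $\det D_M\neq0$, which proves the equivalence.
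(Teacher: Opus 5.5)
Your hesitation at $M<N$ is justified, and the gap there cannot be closed: the implication ``$N$ defective sources $\Rightarrow$ $D_M$ invertible for every $M\le N$'' is false. Take $N=2$, $\I=\{0,2\}$, $\gamma_0=\gamma_2=\tfrac12$, $\xi_0=\tfrac{\pi}{2L}$ and $\Delta\xi=\tfrac{1}{10L}$. Then $\Delta\xi L/2\pi$ is irrational and $\xi_0,\dots,\xi_4\in(0,\pi/L)$, and
\[
D_1=\Theta_{\xi_0}=-\tfrac12\bigl(1+e^{-2i\xi_0L}\bigr)=-\tfrac12\bigl(1+e^{-i\pi}\bigr)=0 .
\]
By your $M=N$ computation $D_2$ is invertible, and $D_3$ is singular. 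So there are two defective sources, yet the pattern fails at $M=1$.

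The paper's proof only gets past this point through an error. For $M\le N$ it writes
\[
u_1=(\Theta_{\xi_0},\dots,\Theta_{\xi_{M-1}})^\top=a_1v_1+\dots+a_Mv_M,
\]
which silently drops $r_{M+1},\dots,r_N$ from $\Theta_{\xi_j}=\sum_{n=1}^N a_nr_n^j$. That expansion is valid only when $M=N$, which is exactly the single-term case of your Cauchy--Binet formula. For the same reason, your closing phrase ``which proves the equivalence'' overclaims.

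What you do prove is correct and matches the valid part of the paper's argument:
\begin{enumerate}
\item $D_N$ is invertible.
\item $D_M$ is singular for $M>N$. Your rank bound through $D_M=V_M\,\mathrm{diag}(a)\,W_M^\top$ is cleaner than the paper's device of appending a dummy node with $a_{N+1}=0$.
\item The converse holds: if the pattern holds for some $N'$, then $N'$ equals the true number of defects. This uses only the two facts above, exactly as in the paper.
\end{enumerate}
The true statement is $N=\max\{M:\det D_M\neq 0\}$, and that is all the algorithm needs.

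Your generic fallback is also right, but its justification needs fixing. The subset sums $\sum_{n\in S}m_n$ need not be distinct; for example $\{0,3\}$ and $\{1,2\}$ give the same sum. Argue instead that the largest sum is attained only by the $M$ largest indices. Then, since $a_n=\sigma_{m_n}w^{m_n}$ with $w=e^{-i\xi_0L}$, $\det D_M$ is a Laurent polynomial in $w$ whose leading coefficient is
\[
\pm\prod_{n\in S_{\max}}\sigma_{m_n}\prod_{i<j\in S_{\max}}(r_j-r_i)^2\neq 0 .
\]
For fixed $\Delta\xi$ it therefore vanishes for at most finitely many $\xi_0$. Consequently $D_M$ is invertible for all $M\le N$ except for finitely many choices of $\xi_0$, which is the strongest form of the forward direction that actually holds.
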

\begin{proof}
    Assume that there are $N$ defective sources.  Let $M\in \mathbb{N}^*$ such that $M\le N$. As in the proof of Theorem~\ref{thm:uniquenessFB},  the Vandermonde matrix 
    $$ V =\begin{bmatrix}
    1 & r_1 & r_1^2 & \dots  & r_{1}^{M-1} \\
        1 & r_2 & r_2^2 & \dots  & r_2^{M-1} \\
    \vdots & \vdots & \vdots & \ddots & \vdots \\
       1 & r_M & r_M^2 & \dots  & r_{M}^{M-1} \\
\end{bmatrix}$$
is invertible. Let $v_j=\left(1,r_j,\ldots,r_j^{M-1}\right)^{\top}$ for $j=1,\ldots,M$, then $\{v_1,v_2,\ldots,v_M\}$ is linearly independent. Denote $$u_1=\begin{bmatrix}
    \Theta_{\xi_0}  \\
    \Theta_{\xi_1}  \\
    \vdots \\
    \Theta_{\xi_{M -1}} 
\end{bmatrix}=\begin{bmatrix}
    a_1+\ldots+a_M  \\
    a_1r_1+\ldots+a_M r_M  \\
    \vdots \\
    a_1r_1^{M-1}+\ldots+a_M r_M^{M-1}
\end{bmatrix}=a_1v_1+\ldots +a_M v_M,$$
$$\vdots$$
$$u_M=\begin{bmatrix}
    \Theta_{\xi_{M -1}}  \\
    \Theta_{\xi_{M}}  \\
    \vdots \\
    \Theta_{\xi_{2M -2}} 
\end{bmatrix}=\begin{bmatrix}
    a_1r_1^{M-1}+\ldots+a_M r_M^{M-1} \\
    a_1r_1^M+\ldots+a_M r_M^M  \\
    \vdots \\
    a_1r_1^{2M-2}+\ldots+a_M r_M^{2M-2}
\end{bmatrix}=a_1 r_1^{M-1}v_1+\ldots+ a_M r_M^{M-1} v_M.$$
Suppose that $\alpha_1 u_1+\ldots+\alpha_M u_M=0,$
then
$$\left(\alpha_1 a_1+\ldots +\alpha_M a_1 r_1^{M-1}\right)v_1+\ldots+\left(\alpha_1 a_M+\ldots+\alpha_M a_M r_M^{M-1}\right)v_M=0.$$
Since $\{v_1,v_2,\ldots,v_M\}$ is linearly independent and $a_n\neq 0$ for all $n=1,\ldots,M$, 
$$\begin{cases}
    \alpha_1 a_1+\ldots 
    +\alpha_M a_1 r_1^{M-1} = 0 \\
    \hspace{2 cm}\ldots  \\
    \alpha_1 a_M+\ldots+\alpha_M a_M r_M^{M-1} =0
\end{cases} \Rightarrow \begin{cases}
    \alpha_1 +\ldots +\alpha_M r_1^{M-1} = 0 \\
   \hspace{1.5 cm}\ldots  \\
    \alpha_1 +\ldots+\alpha_M  r_M^{M-1} =0
\end{cases} \Rightarrow V\begin{bmatrix}
    \alpha_1  \\
    \alpha_2  \\
    \vdots \\
    \alpha_{M} 
\end{bmatrix} =0.$$
Since $V$ is invertible, $\alpha_1=\alpha_2=\ldots=\alpha_M=0$. Therefore, $\{ u_1,u_2,\ldots,u_M\}$ is linearly independent, implying that $ D_M$ is invertible.
On the other hand, $$
    D_{N+1} = \begin{bmatrix}
\Theta_{\xi_N} & \ldots & \Theta_{\xi_0}\\
\Theta_{\xi_{N+1}} & \ldots & \Theta_{\xi_1} \\ 
 \vdots & \ddots & \vdots \\
 \Theta_{\xi_{2N}} & \ldots & \Theta_{\xi_N} 
\end{bmatrix}= \begin{bmatrix}
a_1& \ldots & a_{N+1}\\
a_1 r_1 & \ldots & a_{N+1}r_{N+1} \\ 
 \vdots & \ddots & \vdots \\
 a_1 r_1^N & \ldots & a_{N+1} r_{N+1}^N 
\end{bmatrix}\begin{bmatrix}
r_1^N& \ldots & 1\\
r_2^N & \ldots & 1 \\ 
 \vdots & \ddots & \vdots \\
r_{N+1}^N & \ldots & 1 
\end{bmatrix}. $$
Since 
$\gamma_{m_{N+1}}=1$, we have $a_{N+1}=0$. Thus
$$\begin{bmatrix}
a_1& \ldots & a_{N+1}\\
a_1 r_1 & \ldots & a_{N+1}r_{N+1} \\ 
 \vdots & \ddots & \vdots \\
 a_1 r_1^N & \ldots & a_{N+1} r_{N+1}^N 
\end{bmatrix}=\begin{bmatrix}
a_1& \ldots & 0\\
a_1 r_1 & \ldots & 0 \\ 
 \vdots & \ddots & \vdots \\
 a_1 r_1^N & \ldots & 0 
\end{bmatrix},$$
which implies that $ D_{N+1}$ is singular. Similarly, we can also show that $ D_{M}$ are singular for all $M>N+1$.

Now, assume that $D_M$ are invertible for $M\le N$ and $D_{M}$ are singular for all $M>N$ for some $N\in \N^*$ and there are $K\in\N^*$ defective sources. By the first part, $D_K$ is invertible and $D_{M}$ are singular for all $M>K$.
\begin{itemize}
    \item If $K<N$, then $D_N$ is singular. This contradicts our hypothesis.
    \item If $K>N$ defective sources, then the invertibility of $D_K$ contradicts the assumption that $D_{M}$ are singular for all $M>N$. 
\end{itemize}
Therefore, $K=N$.
\end{proof}

We now proceed to present the method for determining the indices and intensities of the defective sources.

\begin{theorem}\label{thm:mg}
    Let 
$N^*\in\N$ such that 
$\I\subset [-N^*,N^*]$. Then the linear system
\begin{equation}\label{systm:no}
     \begin{bmatrix}
e^{-i\xi_0 (-N^*) L} & \ldots & e^{-i\xi_0 N^* L} \\
 \vdots & \ddots & \vdots\\
e^{-i\xi_{2N^*} (-N^*) L} & \ldots & e^{-i\xi_{2N^*} N^* L}
\end{bmatrix}\begin{bmatrix}
\varkappa_1 \\
 \vdots\\
\varkappa_{2N^*+1}
\end{bmatrix} = \begin{bmatrix}
\Theta_{\xi_0} \\
 \vdots\\
\Theta_{\xi_{2N^*}}
\end{bmatrix}
\end{equation}
is uniquely solvable. 
Furthermore, the indices $m$ for the location of defective sources and their intensities $\gamma_m$ are determined by 
$$m=-N^*-1+j, \hspace{1 cm} \gamma_m=\varkappa_j+1,$$
for $j \in \{ n\in\N: 1 \leq n \leq 2N^* + 1, \varkappa_n \neq 0\}$.

\end{theorem}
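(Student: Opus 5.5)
The plan is to observe that the coefficient matrix in \eqref{systm:no} is exactly the matrix $\mathcal{M}$ from the proof of Theorem~\ref{thm:uniquenessFB}, built on the full index set $\{-N^*,\ldots,N^*\}$ (size $2N^*+1$) instead of $\I$. Write $n_\ell=-N^*-1+\ell$ for $\ell=1,\ldots,2N^*+1$. Then the $(j+1,\ell)$ entry is $e^{-i\xi_j n_\ell L}=e^{-i\xi_0 n_\ell L}\, r_\ell^{\,j}$ with $r_\ell=e^{-i\Delta\xi n_\ell L}$. Hence the matrix factors as the transpose of the Vandermonde matrix in $r_1,\ldots,r_{2N^*+1}$ times the diagonal matrix $\mathrm{diag}(e^{-i\xi_0 n_\ell L})$, exactly as in \eqref{matrixM}.

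The diagonal factor is invertible because its entries have modulus one. For the Vandermonde factor I would argue as in Theorem~\ref{thm:uniquenessFB}. If $r_{\ell_1}=r_{\ell_2}$, then $\Delta\xi(n_{\ell_1}-n_{\ell_2})L\in 2\pi\Z$. Since $\Delta\xi L/(2\pi)$ is irrational by the hypothesis $\Delta\xi\neq 2\pi q/L$, $q\in\mathbb{Q}$, this forces $n_{\ell_1}=n_{\ell_2}$. So the $r_\ell$ are distinct, the Vandermonde determinant is nonzero, and the system is uniquely solvable. This distinctness step is the only real point to check, and it is inherited directly from the earlier proof.

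Next I would exhibit an explicit solution and use uniqueness to identify it with $\varkappa$. Define $\varkappa^*_\ell=\sigma_{n_\ell}=\gamma_{n_\ell}-1$ if $n_\ell\in\I$ and $\varkappa^*_\ell=0$ otherwise. Since $\I\subset[-N^*,N^*]$, the definition \eqref{Theta} gives, for each $j$,
\[
\sum_{\ell=1}^{2N^*+1}e^{-i\xi_j n_\ell L}\varkappa^*_\ell=\sum_{n\in\I}\sigma_n e^{-i\xi_j nL}=\Theta_{\xi_j}.
\]
So $\varkappa^*$ solves \eqref{systm:no}, and by uniqueness $\varkappa=\varkappa^*$.

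Finally, $\varkappa_j\neq0$ holds exactly when $n_j\in\I$, because $\sigma_m=\gamma_m-1\neq0$ for every $m\in\I$ by the definition of $\I$. For such $j$ we have $m=n_j=-N^*-1+j$ and $\gamma_m=\varkappa_j+1$, which is the claimed recovery of the indices and intensities.
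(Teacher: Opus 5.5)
Your proposal is correct and follows the paper's proof: you obtain invertibility from the Vandermonde--diagonal factorization, as for $\mathcal{M}$ in \eqref{matrixM}, and then identify $(\gamma_{-N^*}-1,\ldots,\gamma_{N^*}-1)$ as the unique solution. Your explicit irrationality argument for why the $r_\ell$ are distinct, and your observation that $\varkappa_j\neq 0$ exactly when $-N^*-1+j\in\I$, spell out steps the paper leaves implicit.
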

\begin{proof}
    Similar to matrix $\mathcal{M}$ in \eqref{matrixM}, 
    $$\begin{bmatrix}
e^{-i\xi_0 (-N^*) L} & \ldots & e^{-i\xi_0 N^* L} \\
 \vdots & \ddots & \vdots\\
e^{-i\xi_{2N^*} (-N^*) L} & \ldots & e^{-i\xi_{2N^*} N^* L}
\end{bmatrix}$$ is invertible. Thus \eqref{systm:no} is uniquely solvable. Recall that 
$$\Theta_{\xi_j} = \sum_{m\in \I} (\gamma_m-1)e^{-i\xi_j mL}=\sum_{m\in[-N^*,N^*]\setminus \I} (\gamma_m-1)e^{-i\xi_j mL} + \sum_{m\in \I} (\gamma_m-1)e^{-i\xi_j mL}, $$
for $j=0,1,\ldots,2N^*$ with $\gamma_m=1$ for all $m\in [-N^*,N^*]\setminus \I$. 

This implies that $(\gamma_{-N^*}-1,\ldots,\gamma_{N^*}-1)$ is the solution of \eqref{systm:no}.
It follows that the restricted array $[-N^*, N^*]$ contains some normal sources, which correspond to the zero components of the solution $(\varkappa_1, \ldots, \varkappa_{2N^*+1})$ to the linear system~\eqref{systm:no}. 
Consequently, the defective sources can be identified through the nonzero components of this solution. 
In particular, if $\varkappa_j \neq 0$, the index and intensity of the corresponding defective source are given by
$$m=-N^*-1+j, \hspace{2 cm} \gamma_m=\varkappa_j+1,$$
 for some $j = 1, \ldots, 2N^* + 1$.
 \end{proof}

 The following algorithm is derived directly from Theorems \ref{thm:nd} and \ref{thm:mg}.
 
\begin{algorithm*}[H]\SetAlgoNlRelativeSize{0} 
\renewcommand{\thealgocf}{} 
\caption{Determination of defective sources}
Select a discrete subset $\mathcal{A} \subset \left(-\tfrac{\pi}{L}, \tfrac{\pi}{L}\right){\setminus \{0\}}$\;
\If{$\Theta_{\xi}=0$ for all $\xi\in\mathcal{A}$}{
        Conclude: no defective source\;
    }
\Else{
    Choose $\xi_0$ and  $\Delta\xi$\;
    Compute $\det D_1,\,\det D_2,\,  \det D_3, \ldots$ to determine $N$ - the number of defective sources\;
    Select $N^*\in \N$ such that $N^*>(N-1)/2$ and solve \eqref{systm:no} for $(\varkappa_{-N^*},\ldots,\varkappa_{N^*})$\;
 Set $\mathrm{X} = \{(\varkappa_j,j) : \varkappa_j \neq 0, \, j=-N^*,\ldots,N^*\}$\;
 \If{$|\mathrm{X}|\neq N$}{increase $N^*$ and repeat the procedure until $|\mathrm{X}|= N$\;}
 \Else{
Output all indices and intensities of defective sources
$$m=-N^*-1+j,\qquad \gamma_m = \varkappa_j+1,$$
for $(\varkappa_j,j)\in \mathrm{X}$.
}}
\end{algorithm*}
\begin{remark}
    We observe from the numerical implementation that if the chosen $N^*$ is not large enough to fully cover $\I$,  all components of the solution $(\varkappa_{-N^*},\ldots,\varkappa_{N^*})$ will be nonzero, leading to $2N^*+1$ defective sources. This is greater than the actual number of defects that was determined in the previous step since $N^*>(N-1)/2$. Such a result indicates that we should increase $N^*$.
\end{remark}

\section{Numerical examples}\label{section4}
We present several numerical examples to evaluate the performance of the proposed method. In all cases, 
$
\Omega_0 = (-1,1) \times (-2,2),$
with point sources arranged periodically with period $L = 2$. The data is generated by $$u_\xi=\mathcal{F}\left( \sum_{m \in \I} \sigma_m\frac{i}{4} H^{(1)}_0(k|x - \x_m|)\right)(\cdot;\xi),$$ 
for multiple values of $\xi\in \left(-\tfrac{\pi}{L},\tfrac{\pi}{L} \right)\setminus\{0\}$ with a note that $\mathcal{F}G_{\text{per}}(\cdot;\xi)=0$ for $\xi\in \left(-\tfrac{\pi}{L},\tfrac{\pi}{L} \right)\setminus\{0\}$.
Here, $u_\xi$ is computed at wave number $k=10.5$ on $\Lambda_{3} \cup \Lambda_{-3}$ with added artificial random noise.  The noise vector $\mathcal{N}$ is incorporated to $u_\xi$, the solution to \eqref{eq:direct:1s0}, as follows
$$u_\xi+\delta\dfrac{\mathcal{N}}{\| \mathcal{N}\|_F}\| u_\xi\|_F,$$
where $\delta$ is the level of noise and $\| \cdot\|_F$ is the Frobenius matrix norm. Note that $\mathcal{N}$ consists of numbers $a+bi$ where $a,b\in (-1,1)$ are randomly generated with a uniform distribution. 
More specifically, we use
$$\xi_j = -\dfrac{\pi}{L} + 0.01 + 0.295j,\qquad j=0,1,\ldots,8$$
for computing $\det D_M$ and
$$\xi_j = -\dfrac{\pi}{L} + 0.03 + 0.13j,\qquad j=0,1,\ldots,22$$
for reconstructing $m$ and $\gamma_m$.

We can clearly see a significant peak of $|I_\xi(z)|^3$ (i.e., $p=3$) in Figure \ref{pic2}. Then we determine $\x_0$ as the location of this peak. Tables \ref{tb21} and \ref{tb22} demonstrate that the computed location of $\x_0$ is highly accurate.

\begin{figure}[H]
    \centering
    \begin{subfigure}{0.33\textwidth}
        \centering
        \includegraphics[width=\textwidth]{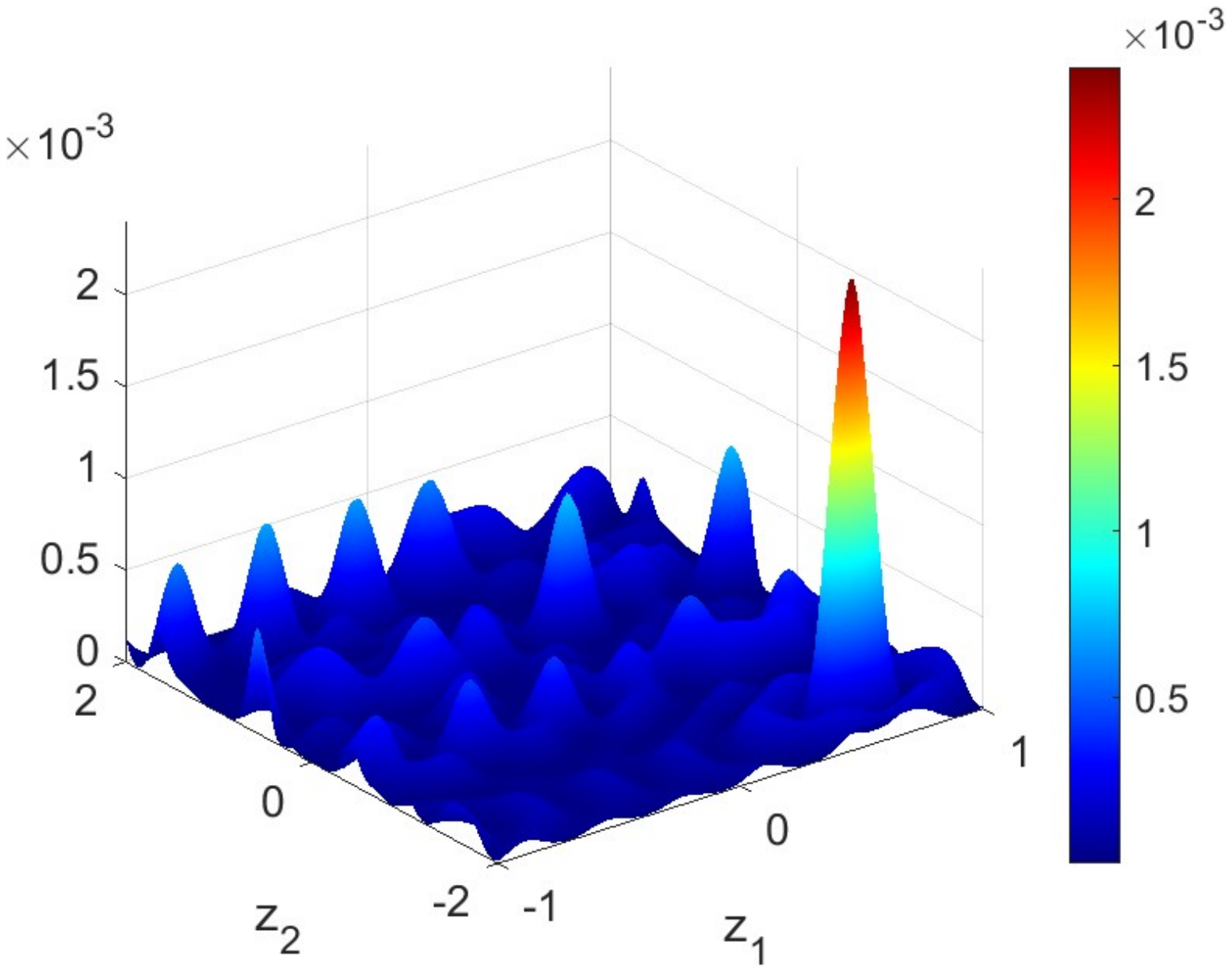}
        \caption{$\x_0 = (0.61,-1.59)$}
        \label{fig:pic2-left}
    \end{subfigure}
    \hspace{0cm}
    \begin{subfigure}{0.33\textwidth}
        \centering
        \includegraphics[width=\textwidth]{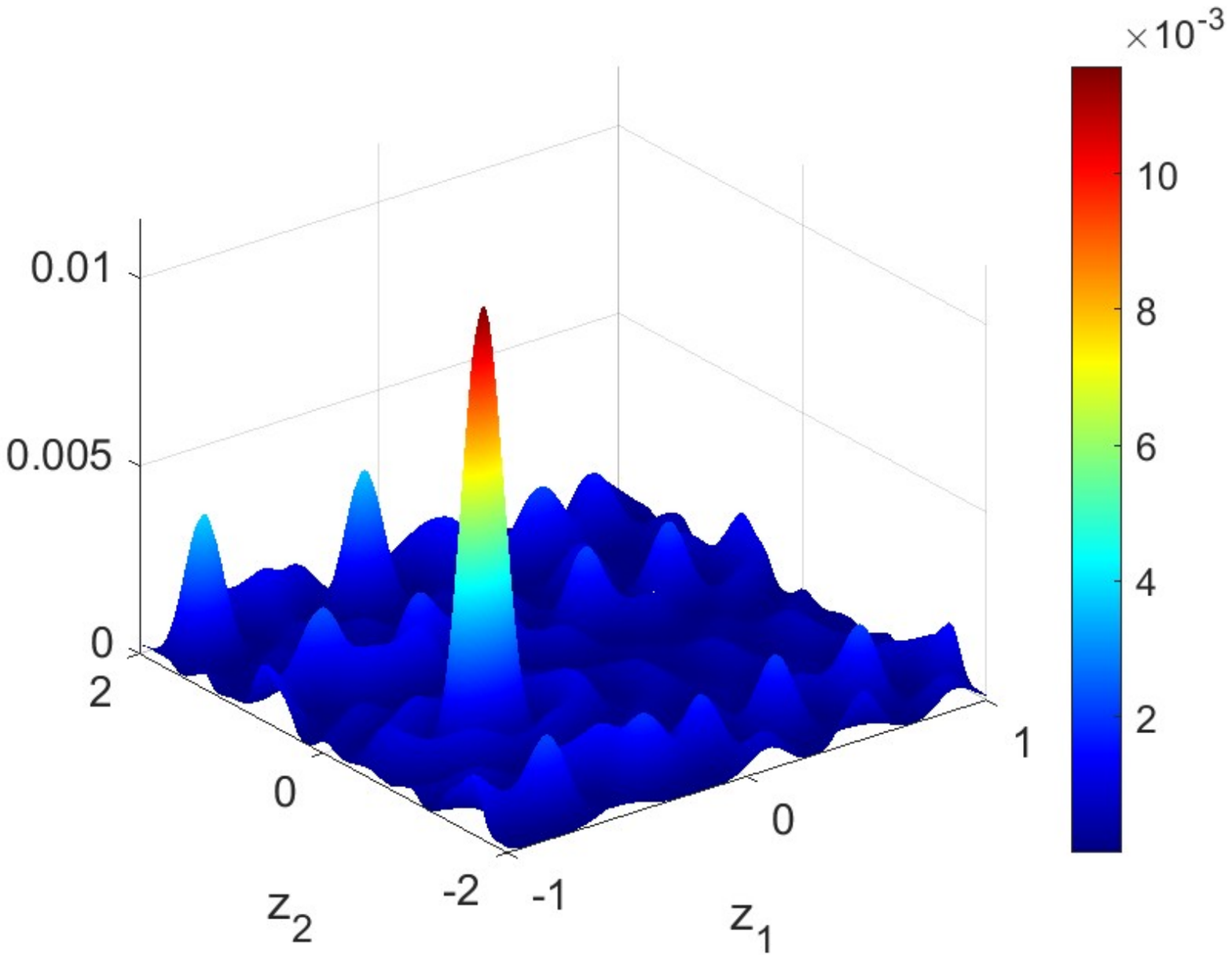}
        \caption{$\x_0 = (-0.52,-0.48)$}
        \label{fig:pic2-right}
    \end{subfigure}
    \caption{$|I_\xi(z)|^3$ with $\xi = -0.67$.}
    \label{pic2}
\end{figure}

Tables \ref{table:11} and \ref{table:12} summarize the computational results for determining the number of defective units by $\det D_1,\,\det D_2,\, \det D_3,\ldots$. Following Theorem \ref{thm:nd}, if $\det D_M \neq 0$ for $M\le N$ and $\det D_{M}= 0$ for $M >N$, there are $N$ defective sources. This pattern can be observed that in all test cases, $\det D_M$ remain nonzero when $M$ does not exceed the true number of defects and $\det D_M$ become nearly zero for larger $M$, confirming the validity and accuracy of the proposed criterion.

\begin{table}[H]
\centering
\footnotesize
\begin{tabular}{cc}
\hline
True number of defective sources                                               & $\det D_M$                                                                                                                                                                        \\ \hline
1                                                      & \begin{tabular}[l]{@{}l@{}l@{}}$\det D_1 =0.5125 - 0.0298i$\\$\det D_2 = -6.5\cdot 10^{-4} - 4.2\cdot 10^{-4}i$\\ $\det D_3 = -2.0\cdot 10^{-4}- 1.8\cdot 10^{-4}i$
\end{tabular}  

\\ \hline
2        & \begin{tabular}[l]{@{}l@{}l@{}}$\det D_1 = 0.8014 - 0.0922i
$\\$\det D_2 =  0.0793 + 0.2970i$\\ $\det D_3 = 0.0007 - 0.0124i$\\ 
$\det D_4 = 0.0017 + 0.0004i$
\end{tabular}

\\ \hline
3 & \begin{tabular}[l]{@{}l@{}l@{}}$\det D_1 =  0.9240 + 0.0455i$\\$\det D_2 = 0.4889 - 0.6922i$\\ $\det D_3 =0.0496 + 0.1050i$\\ $\det D_4 = -0.0036 + 0.0065i
$
\\
$\det D_5 = 9.9\cdot 10^{-5} - 2.0\cdot 10^{-4}i$
\end{tabular}                                              \\ \hline
\end{tabular}
\captionof{table}{Determination of the number of defective sources (noise level $\delta = 10\%$)}\label{table:11}
\end{table}

 \begin{table}[H]
\centering
\footnotesize
\begin{tabular}{cc}
\hline
True number of defective sources                                               & $\det D_M$                                                                                                                                                                        \\ \hline
1                                                      & \begin{tabular}[l]{@{}l@{}l@{}}$\det D_1 =0.5132+ 0.0326i$\\$\det D_2 = -0.0016 + 0.0067i$\\ $\det D_3 = 5.5\cdot 10^{-5}- 2.1\cdot
 10^{-4}i
$
\end{tabular}  

\\ \hline
2        & \begin{tabular}[l]{@{}l@{}l@{}}$\det D_1 = 0.8006 - 0.0895i
$\\$\det D_2 =  0.0528 + 0.3013i$\\ $\det D_3 = 0.0006 - 0.0212i$\\ 
$\det D_4 = 0.0001 - 0.0029i$
\end{tabular}

\\ \hline
3 & \begin{tabular}[l]{@{}l@{}l@{}}$\det D_1 =  0.9305 + 0.0399i$\\$\det D_2 = 0.4343 - 0.7450i$\\ $\det D_3 =
  0.1279 - 0.0863i$\\ $\det D_4 = -0.0026 + 0.0080i
$
\\
$\det D_5 = 0.0070 - 0.0041i$
\end{tabular}                                              \\ \hline
\end{tabular}
\captionof{table}{Determination of the number of defective sources (noise level $\delta = 20\%$)}\label{table:12}
\end{table}

The reconstruction results for the indices and intensities of the defective point sources are presented in Tables \ref{tb21} and \ref{tb22}. All indices are successfully recovered. The reconstructed intensities are also accurate, with relative errors below $5\%$ in all cases. It is worth noting that even under $20\%$ noise, the method maintains reasonable accuracy, with relative errors in the reconstructed intensities remaining within $13\%$.

\begin{table}[H]\centering
\footnotesize
\begin{tabular}{cccccc}
\hline
True $\x_0$      & Comp. $\x_0$  & True $m$                                            & Comp. $m$                                                            & True $\gamma$                                       & Comp. $\gamma$                                          \\ \hline
$(0.61, -1.59)$ & $(0.6063, -1.5906)$ & \begin{tabular}[c]{@{}c@{}}$3$\end{tabular}   & \begin{tabular}[c]{@{}c@{}}$3$\end{tabular}   & \begin{tabular}[c]{@{}c@{}}$0.5$\end{tabular} & \begin{tabular}[c]{@{}c@{}}$0.5236 + 0.0292i
$ \end{tabular}\\
 \hline
 $(-0.70, -1.15)$ & $(0.7008, -1.1496)$ & \begin{tabular}[c]{@{}c@{}}$-2$\\ $5$\end{tabular}   & \begin{tabular}[c]{@{}c@{}}$-2 $\\ $5$\end{tabular}   & \begin{tabular}[c]{@{}c@{}}$0.9$\\ $0.1$\end{tabular} & \begin{tabular}[c]{@{}c@{}}$0.8970 - 0.0129i$\\ $0.0972 - 0.0114i
$\end{tabular}\\
 \hline
 $(-0.52, -0.48)$ & $( 0.5118,  -0.4882)$ & \begin{tabular}[c]{@{}c@{}c@{}}$-1 $\\ $3$\\ $10$\end{tabular}   & \begin{tabular}[c]{@{}c@{}c@{}}$-1 $\\ $3$\\ $10$\end{tabular}   & \begin{tabular}[c]{@{}c@{}c@{}}$0.3$\\ $0.5$\\$0.7$\end{tabular} & \begin{tabular}[c]{@{}c@{}c@{}}$0.2924 - 0.0064i$\\ $0.4940 + 0.0022i$ \\ $0.6905 - 0.0068i$\end{tabular}\\
 \hline
\end{tabular}
\captionof{table}{Reconstruction results for the indices and intensities of defective sources (noise level $\delta = 10\%$)}\label{tb21}
\end{table}

\begin{table}[H]\centering
\footnotesize
\begin{tabular}{cccccc}
\hline
True $\x_0$      & Comp. $\x_0$  & True $m$                                            & Comp. $m$                                                            & True $\gamma$                                       & Comp. $\gamma$                                          \\ \hline
$(0.61, -1.59)$ & $(0.6063, -1.5906)$ & \begin{tabular}[c]{@{}c@{}}$3$\end{tabular}   & \begin{tabular}[c]{@{}c@{}}$3$\end{tabular}   & \begin{tabular}[c]{@{}c@{}}$0.5$\end{tabular} & \begin{tabular}[c]{@{}c@{}}$0.5195 + 0.0172i$ \end{tabular}\\
 \hline
 $(-0.70, -1.15)$ & $(0.7008, -1.1496)$ & \begin{tabular}[c]{@{}c@{}}$-2$\\ $5$\end{tabular}   & \begin{tabular}[c]{@{}c@{}}$-2 $\\ $5$\end{tabular}   & \begin{tabular}[c]{@{}c@{}}$0.9$\\ $0.1$\end{tabular} & \begin{tabular}[c]{@{}c@{}}$0.8964+ 0.0096i$\\ $0.0870 - 0.0099i

$\end{tabular}\\
 \hline
 $(-0.52, -0.48)$ & $( 0.5118,  -0.4882)$ & \begin{tabular}[c]{@{}c@{}c@{}}$-1 $\\ $3$\\ $10$\end{tabular}   & \begin{tabular}[c]{@{}c@{}c@{}}$-1 $\\ $3$\\ $10$\end{tabular}   & \begin{tabular}[c]{@{}c@{}c@{}}$0.3$\\ $0.5$\\$0.7$\end{tabular} & \begin{tabular}[c]{@{}c@{}c@{}}$0.2789 - 0.0259i$\\ $0.4775 - 0.0143i$ \\ $0.6848 + 0.0081i$\end{tabular}\\
 \hline
\end{tabular}
\captionof{table}{Reconstruction results for the indices and intensities of defective sources (noise level $\delta = 20\%$)}\label{tb22}
\end{table}

\section{Conclusion}\label{section5}
In this work, we investigated the problem of identifying defective sources for Helmholtz equation in unbounded periodic arrays. The main challenge is that defects break the periodic pattern, which makes classical inverse source methods ineffective. To address this, we used the Floquet–Bloch transform to reformulate the original problem into a quasi-periodic inverse source problem.
We proved that both the original and the quasi-periodic problems have unique solutions, meaning the defective sources can be determined from boundary measurements. Building on this foundation, we proposed a numerical reconstruction method that combines a stable imaging function with an algebraic identification scheme. This hybrid method reliably recovers the number, locations, and intensities of defective sources. The numerical experiments show that the approach is accurate, easy to implement, computationally efficient, and highly stable even under high noise levels. 
Future work may extend the framework to small-volume source models and to inverse problems governed by Maxwell's equations.

\vspace{0.3cm}

\noindent {\bf Acknowledgment.} The authors would like to thank the anonymous referees for their careful reading and insightful comments, which have  improved the quality of the presentation. The research of T.-P. Nguyen is partially supported by the NSF Grant DMS-2513647. The research of D.-L. Nguyen and N. H. Nguyen is partially supported by NSF Grant DMS-2243854.
\bibliographystyle{abbrv}
\bibliography{references} 

\end{document}